\theoremstyle{plain}  
\def\mB{\mathcal{B}}
\def\mN{\mathcal{N}}
\def\mO{\mathcal{O}}
\def\mS{\mathcal{S}}
\newtheorem{lemma}{Lemma}
\newtheorem{thm}{Theorem}
\newtheorem{defi}{Definition}
\newtheorem{coro}{Corollary}
\newtheorem{fact}{Fact}
\def\bA{\bm{A}}
\def\bB{\bm{B}}
\def\bC{\bm{C}}
\def\bE{\bm{E}}
\def\bI{\bm{I}}
\def\bP{\bm{P}}
\def\bQ{\bm{Q}}
\def\bU{\bm{U}}
\def\bV{\bm{V}}
\def\bX{\bm{X}}
\def\bSigma{\bm{\Sigma}} 
\def\b0{\bm{0}}
\def\bv{\bm{v}}
\def\bx{\bm{x}}
\def\by{\bm{y}}
\newcommand{\z}{\mathbf 0}
\newcommand{\R}{\mathbb R}
\DeclareMathOperator{\sign}{sgn}
\DeclareMathOperator{\dist}{dist}
\DeclareMathOperator{\dom}{dom}
\DeclareMathOperator*{\argmin}{argmin}
\begin{document}
%\title{Learning Graphs from Smooth Signals under Moment Uncertainty}
\title{A Linearly Convergent Algorithm for Rotationally Invariant $\ell_1$-Norm 
Principal Component Analysis}

\author{
	Taoli Zheng, Peng Wang, and Anthony Man-Cho So, \it{Senior Member, IEEE}\thanks{T. Zheng and A. M.-C. So are with the Department of Systems Engineering and Engineering Management, The Chinese University of Hong Kong, Hong Kong SAR, China (e-mails: {\tt \{tlzheng,manchoso\}@se.cuhk.edu.hk}). P. Wang is with  the Department of Electrical Engineering and Computer Science, University of Michigan, Ann Arbor (e-mail: \tt pengwa@umich.edu).} \\
% \thanks{The first and third authors are supported by . The second author is supported by .}
}

\maketitle

\begin{abstract}
	To do dimensionality reduction on the datasets with outliers, the $\ell_1$-norm principal component analysis (L1-PCA)   as a typical robust alternative of the conventional PCA has enjoyed great popularity over the past years. In this work, we consider a rotationally invariant L1-PCA, which is hardly studied in the literature. To tackle it, we propose  a proximal alternating linearized minimization method with a nonlinear extrapolation for solving its two-block reformulation. Moreover, we show that the proposed method converges at least linearly to a limiting critical point of the reformulated problem. Such a point is proved to be a critical point of the original problem under a condition imposed on the step size. Finally, we conduct numerical experiments on both synthetic and real datasets to support our theoretical developments and demonstrate the efficacy of our approach. %is competitive with the existing approaches. %robust PCA approaches in terms of computational efficiency and clustering accuracy.     
\end{abstract}

\begin{IEEEkeywords}
Rotational invariance, $\ell_1$-norm principal component analysis, proximal alternating linearized minimization, non-linear extrapolation.
\end{IEEEkeywords}

\IEEEpeerreviewmaketitle

\section{Introduction}\label{sec:intr}
With the widespread availability of high-dimensional data, principal component analysis (PCA) \cite{jolliffe2016principal} as a typical dimensionality reduction technique plays an increasingly important role in data analysis. It has found extensive applications in diverse fields such as computer vision \cite{yang2004two,moon2001computational,faruqe2009face}, image processing \cite{ke2004pca,bouwmans2018applications}, and network analysis \cite{lakhina2004diagnosing,lei2015consistency}, to name a few. Due to the fact that the conventional PCA (also known as L2-PCA) is sensitive to corruptions (generically referred to as {\it outliers} \cite{barnett1984outliers}) in the datasets, L1-PCA as a robust alternative of L2-PCA has recently received significant attention; see, e.g., \cite{devlin1981robust,lerman2018overview}. In this work, we consider a particular form of L1-PCA as follows: 
\begin{equation}\label{PCA:L1}
	\max\left\{\|\bQ \bQ^T\bX\|_1:\bQ \in {\rm St}(d,K)\right\}.
\end{equation} 
Here, $\bX = \left[ \bx_1,\dots,\bx_n\right] \in  \R^{d\times n}$ is the data matrix with the sample mean being zero, where $n$ and $d$ respectively denote the number of samples and dimension of the data points, $\|\bA\|_1=\sum_{i,j}|a_{ij}|$ denotes the $\ell_1$-norm of the matrix $\bA$, $K$ is the dimension of subspace with $K \le \min\{n,d\}$, and ${\rm St}(d,K)=\left\{\bQ \in \R^{d\times K}:\bQ^T \bQ=\bI_K\right\}$ denotes the Stiefel manifold. Generally, this problem can be used to find a low-dimensional subspace underlying a corrupted dataset by minimizing the variation of the projections of data points onto the subspace measured by the $\ell_1$-norm. Different from the formulation of Problem \eqref{PCA:L1}, % It is worth noting that this problem is closely related to 
there are actually many other formulations for L1-PCA  %From the perspective of optimization, this problem is difficult to optimize since it has a non-smooth and non-convex objective plus an orthogonal constraint. 
that have been well studied in the literature; see, e.g., \cite{baccini1996l1,ding2006r,wang2021linear,nie2021non,park2016iteratively,ke2005robust}.
However, as pointed out in \cite{lerman2018overview}, no work focuses on Problem \eqref{PCA:L1}. To fill this gap, this work is devoted to develop an efficient algorithm for solving Problem \eqref{PCA:L1} with convergence analysis.  

Now, we review different formulations for L1-PCA in the literature. It is known that PCA admits two common interpretations for dimension reduction. One is that PCA aims to find a low-dimensional subspace for which the projections of data points onto it preserve the most variance. Specifically, the corresponding formulation known as L2-PCA is given by
\begin{equation}\label{PCA:max-L2}
	\max\left\{ \|\bQ\bQ^T \bX\|_F = \|\bQ^T \bX\|_F: \bQ \in {\rm St}(d,K) \right\},
\end{equation}
% where $K$ is the dimension of subspace with $K \le \min\{n,d\}$ and ${\rm St}(d,K)=\left\{\bQ \in \R^{d\times K}:\bQ^T \bQ=\bI_K\right\}$ is the Stiefel manifold. 
% This is also widely known as L2-PCA. 
The other sheds light on another perspective of PCA that aims to find a low-dimensional subspace for which the projections of data points onto it best approximate these data points. This leads to a minimization formulation of the form 
\begin{equation}\label{PCA:min-L2}
	\min\left\{\|\bX -\bQ\bQ^T \bX\|_F: \bQ \in {\rm St}(d,K) \right\}. 
\end{equation} % are prevalent formulations of L2-PCA in the literature. Moreover, these two problems
One can verify that this problem is equivalent to Problem \eqref{PCA:max-L2}. In particular, both of them admit a closed-form solution that can be solved efficiently by computing the singular value decomposition (SVD) of the data matrix $\bX$. Moreover, the subspace spanned by the columns of the obtained solution possesses many nice properties \cite{jolliffe2016principal}. Nevertheless, the above two formulations have an essential defect that they are sensitive to outliers in the data matrix due to the fact that the $\ell_2$-norm is not a robust scale function; see, e.g., \cite{devlin1981robust}. The outliers could significantly disturb the singular values of the data matrix and mislead the learning of the desired subspace. To tackle this problem, a recent line of research on robust subspace recovery becomes more and more popular \cite{lerman2018overview} and a great deal of work adopting different norms as scale functions have been done in the literature; see, e.g., \cite{brooks2013pure,ke2005robust,park2016iteratively,ding2006r,nie2021non}. Notably, taking the $\ell_1$ norm as the scale function gives L1-PCA. % Even if the aforementioned three models are equivalent, their robust forms diverge and give different solutions. 
On one hand, along the line of maximizing the variance of projected data as in Problem \eqref{PCA:max-L2}, we can naturally obtain Problem \eqref{PCA:L1} and 
\begin{equation}
	\label{eq:max_l1}
	\max\left\{\|\bQ^T \bX\|_1: \bQ \in {\rm St}(d,K)\right\}.
\end{equation}
Compared to L2-PCA, the $\ell_1$-norm endows PCA with robustness but introduces non-smoothness into the objective function simultaneously. Consequently, Problems \eqref{PCA:L1} and \eqref{eq:max_l1} are both non-convex and non-smooth, and have no closed-form solutions. Unlike the equivalence of L2-PCA in Problem \eqref{PCA:max-L2}, Problems \eqref{PCA:L1} and \eqref{eq:max_l1} are generally not equivalent. Indeed, the objective of the latter problem is the pointwise maximum of linear functions, while the former is the pointwise maximum of quadratic functions. In particular, Problem \eqref{eq:max_l1} is widely studied in the literature. For example, Kwak \cite{kwak2008principal} proposed a fixed-point method for solving Problem \eqref{eq:max_l1} in a greedy manner and later Nie et al. \cite{nie2011robust} improved it to a non-greedy manner. Recently, Markopoulos et al. \cite{markopoulos2017efficient} developed an algorithm based on bit-flipping iteration. More recently, Wang et al. \cite{wang2019globally,wang2021linear} proposed a proximal alternating minimization method with extrapolation for solving it. We
refer the reader to \cite{markopoulos2018outlier} for more algorithms for solving Problem \eqref{eq:max_l1}. % More algorithms for solving Problem \eqref{eq:max_l1}  can be found in \cite{mccoy2011two,markopoulos2014optimal,kim2019simple}.
% Besides, Problem \eqref{eq:max_l1} is widely studied in the literature; see, e.g., \cite{wang2021linear,kwak2008principal,nie2011robust,markopoulos2014optimal,markopoulos2017efficient}. However, almost no work focuses on Problem \eqref{eq:l1} as pointed out in \cite{lerman2018overview}. To fill this gap, we are devoted to develop an efficient algorithm for solving Problem \eqref{eq:l1} with a convergence guarantee.
Besides, it is known that L2-PCA is rotationally invariant in the sense that the objective value remains unchanged under arbitrary subspace rotations. Specifically, the objective values of Problems \eqref{PCA:max-L2} and \eqref{PCA:min-L2} are unchanged if we replace $\bQ$ by $\bQ\bU$ for any orthogonal matrix $\bU \in \R^{K\times K}$. However, this property generally does not hold for the L1-PCA formulation \eqref{eq:max_l1}. Fortunately, the formulation \eqref{PCA:L1} keeps this property. This also motivates us to study Problem \eqref{PCA:L1}. In the following, we refer to it as {\it rotationally invariant} L1-PCA. On the other hand, along the line of minimizing the approximation error of data points as in Problem \eqref{PCA:min-L2}, we can obtain another L1-PCA formulation of the form
\begin{equation}\label{eq:l1_min}
	\min\left\{ \|\bX-\bQ \bQ^T\bX\|_1: \bQ \in {\rm St}(d,K) \right\}.
\end{equation}
This problem has been considered in the literature. For example, Baccini et al. \cite{baccini1996l1} proposed a heuristic approach based on a canonical correlation analysis. Recently, Park et al. \cite{park2016iteratively} proposed an iteratively reweighted least squares (IRLS) methods and provided convergence analysis. % Specifically, with appropriately chosen weights, weighted L2-PCA admits a relatively good approximation of that of $\ell_1$ form. 
More related algorithms can be found in \cite{minnehan2018grassmann,liu2021robust}. 
Then, we review some other robust alternatives of L2-PCA closely related to L1-PCA. % Identifying the projected-and-lifted data as the product of two matrices, we modify above model to be an unconstrained problem. 
A popular one is the low-rank matrix factorization formulation of the form
\begin{equation}
	\min \left\{\|\bX-\bU\bV\|_1: \bU \in \R^{d\times K},\ \bV \in \R^{K\times n} \right\}.
\end{equation}
Ke and Kanade \cite{ke2005robust} proposed an alternating convex minimization method for solving it. Later, Yu et al. \cite{yu2012efficient} considered an augmented lagrange multiplier method and Eriksson and Hengel proposed a generalization of the Wiberg algorithm \cite{eriksson2012efficient} for tackling it. % In comparison with model \eqref{eq:l1_min}, it drops the stiefel manifold constraint and bi-convexity of objective function motives the alternating minimization algorithm. 
Up to now, the above formulations all utilize the $\ell_1$-norm as the scale function to achieve robustness. Actually, many other different norms have been employed in the literature. A notable one is the $\ell_{2,1}$-norm, which is defined as $\|\bA\|_{2,1}=\sum_{i}(\sum_{j}a_{ij}^2)^{1/2}$. This is also known as the $R_1$-norm in \cite{ding2006r}. 
This norm, together with the second interpretation of PCA, motivates the R1-PCA formulation as follows:   
\begin{equation}\label{eq:l21_min}
	\min \left\{\|\bX-\bQ\bQ^T\bX\|_{2,1}: \bQ \in {\rm St}(d,K) \right\}.
\end{equation}
It is noteworthy that this problem not only satisfies the mentioned subspace rotational invariance but also satisfy another {\it rotationally invariant} property in the sense that it is invariant to the change of coordinates in $\R^{d}$. %However, such property has essential difference with the one mentioned before and refers to the invariance to the change of coordinates in $\R^{d}$. 
Specifically, under data transformations $\bX \rightarrow \bU\bX$ for any orthogonal matrix $\bU \in \R^{d\times d}$, the objective value is unchanged when $\bQ \rightarrow \bU\bQ$. That is to say that when the sample space is rotated, the feature subspace will be rotated in the same way. %Although motivated by the rotational invariance of sample space, it also plays an important role of guaranteeing the invariance under arbitrary subspace rotations. 
For the sake of clarity, in our paper, we  refer to rotationally invariance as invariance with respect to \emph{subspace rotations}.
To solve this problem, Ding et al. \cite{ding2006r} proposed a subspace iteration algorithm. Later, Wang et al. \cite{wang2017ell} generalized this robust formulation via replacing $\ell_{2,1}$-norm with $\ell_{2,p}$-norm for any $p\in (0,2)$. Recently, Nie et al. \cite{nie2021non} considered a robust PCA model using the $\ell_{2,1}$-norm based on the first interpretation of PCA, i.e.,
\begin{equation}
	\max \left\{\|\bQ^T\bX\|_{2,1}:\bQ \in {\rm St}(d,K)\right\}.
\end{equation}
They developed an efficient non-greedy method to solve this problem. We refer the reader to \cite{lerman2018overview} for more formulations of robust PCA. % {\color{red}So far, except Problem \eqref{eq:max_l1}, all aforementioned models are rotationally invariant.}

\subsection{Contribution}

%  is of primary interests
% To best of our knowledge, there is no existing literature studying Problem \eqref{PCA:L1}.
In this work, our first contribution is to propose a \emph{proximal alternating linearized minimization} (PALM) algorithm for solving a two-block reformulation (see Problem \eqref{PCA:L1-Re}) of Problem \eqref{PCA:L1}. Motivated by the encouraging performance of extrapolation techniques for accelerating non-convex optimization problems (see, e.g., \cite{wen2018proximal,Li2015,lu2019enhanced,wang2021linear}), we incorporate a quadratic extrapolation into the update of a block variable to possibly accelerate the PALM method. It is worth noting that compared to the widely used linear extrapolation in the literature (see, e.g., \cite{wen2018proximal,Li2015,lu2019enhanced,wang2021linear}),  our proposed quadratic  extrapolation (see \eqref{update:P-extra}) seems to be new. We refer to the resulting method as {\it proximal alternating linearized minimization with extrapolation} (PALMe). PALMe is computationally efficient since the update of each block of PALMe admits a closed-form solution and per-iteration cost is $\mO(ndK+dK^2)$.  % Such methods admits an efficient implementation with $\bO(ndK+dK^2)$ time complexity. Hence, our algorithm can handle large scale datasets. 
% In general, there is no guarantee for the convergence under non-linear extrapolation. Nevertheless, in our case, we can treat the quadratic term as a whole and the compactness of stiefel manifold leads the convergence. 
Our second contribution is to show that the proposed method converges at least linearly to a limiting critical point  (see Definition \ref{def:crit}) of the reformulation problem and this point is proved to be a critical point of Problem \eqref{PCA:L1} (see Definition \ref{def:crit-l}) under a verifiable condition imposed on the step size. To this end, we show that the Kurdyka-\L ojasiewicz (K\L) exponent (see Definition \ref{def:KL-exponent}) of Problems \eqref{PCA:L1-Re} is $1/2$. With this characterization, we employ the convergence analysis framework in \cite{bolte2014proximal} to show the linear convergence of PALMe. Our third contribution is to conduct numerical experiments on synthetic and real data sets to support our theoretical results. Specifically, the experimental results demonstrate that our approach is competitive, in terms of both numerical efficiency and clustering accuracy, with other PALM-type methods for solving Problem \eqref{PCA:L1}. We also compare our approach with various robust PCA approaches on image reconstruction, which illustrates the robustness to outliers of our approach.
% After getting the solution of rotational invariant L1-PCA, the characterization of iterates is of secondary concern. As our second contribution, we point out the iterates linearly converge to limiting subdifferential of problem \eqref{PCA:L1}. Since the extrapolation is involved in update, the monotonicity of function value is uncertain. Thus, construction of potential function is crucial. Utilizing the K\L\ framework in \cite{attouch2013convergence}, we give the linear convergence result for proposed algorithm.
%A pile of experiments have been done to demonstrate the linear convergence of iterates in section \eqref{sec:experiment}. This coincides with the theoretical results in section \eqref{sec:convergence}. Moreover, to illustrate the effectiveness of our model, various robust models with rotational invariance property are tested for subspace clustering. Given same properties, our model can achieve comparable clustering accuracy with others. 

Lastly, let us highlight the differences between this work and a closely related one \cite{wang2021linear}. First, the proximal alternating minimization method with extrapolation (PAMe)  in \cite{wang2021linear} is developed for solving Problem \eqref{eq:max_l1}, in which the objective is a pointwise maximum of linear functions. By contrast, the objective of our considered Problem \eqref{PCA:L1-Re} is a pointwise maximum of quadratic functions and thus PAMe is not applicable to ours. Second, we apply a quadratic extrapolation step to the update of the block variable $\bP$, which highly relies on the quadratic form of the objective function, while the authors  in \cite{wang2021linear} utilize the standard linear extrapolation step. Moreover, this extrapolation scheme also gives rise to the third difference between our convergence analysis and that in \cite{wang2021linear}. Specifically, the quadratic extrapolation step makes it more complicated to verify the decreasing property. % and it is notable that sufficient decrease property is the key point for proving the convergence of function values. % It is worth noting that there are three main differences. Firstly, instead of being a pointwise maxmization of linear functions (see problem \eqref{eq:max_l1}), our problem \eqref{PCA:L1} is more chanlleging which is the pointwise maxmization of quadratic functions and PAMe is not applicable to ours. Although the quadratic term introduces some difficulties, it is rotationally invariant. Moreover, experiments are conducted to demonstrate the effectiveness of rotational invariance in section \eqref{sec:comparison}. Secondly, we take a non-linear extrapolation on $\bQ$ and it is highly relies on the quadratic form in objective function, which can not be applied to problem \eqref{eq:max_l1}. As far as I know, there is no work taking a non-linear extrapolation. Our new extrapolation scheme also triggers the third difference in convergence analysis. In particular, the non-linear extrapolation of $\bQ$ make it difficult to quantify the decreasing property of potential function values and it is notable that sufficient decrease property is the key point for proving the convergence of function values.

\subsection{Notation and Definitions}

% The notation in our paper are mostly standard. The Grassmannian $\bG(d,k)$ denotes all the $k$-dimensional linear subspace in $\R^d$. For a subspace $L$, the projection operator is denoted as $\bP_{L}$. We can express $\bP_{L}$ as $\bQ\bQ^T$ due to the projection theorem. Here, $\bQ$ belongs to the Stiefel manifold ${\rm St}(d,K)=\left\{\bQ \in \R^{d\times K}:\bQ^T \bQ=\bI_K\right\}$. The group of orthogonal matrices are denoted as $\bO(K,K)=\{\bR \in \R^{k \times k}: \bR^T\bR = \bI_k, \bR\bR^T = \bI_k\}$. 
Let $\R^n$ be the $n$-dimensional Euclidean space. We write the matrices in bold capital letter like $\bQ$, vectors in bold low-case letters like $\bm{q}$, and scalars in plain letters like $q$. 
Given a matrix $\bX \in \R^{d\times n}$, we use $\|\bX\|_F$ to denote its Frobenius norm, $\|\bX\|$ its spectral norm, and $x_{ij}$ its $(i,j)$-th element. For any $x\in\R$, let
\[
\sign(x) \in \left\{
\begin{array}{c@{\,\,\,}l}
	\{x/|x|\}, & x \not= 0, \\
	\noalign{\smallskip}
	\{-1,1\}, & x = 0
\end{array}
\right.
\]
denote its sign function. 

Next, we introduce some standard concepts in non-smooth analysis for our development. The details can be found in, e.g., \cite{rockafellar2009variational}. For a non-empty closed set $\mS \subseteq \R^{p}$, the \emph{indicator function} $\delta_{\mS}: \R^{p} \rightarrow \{0,+\infty\}$ associated with $\mS$ is defined as 
\[
\delta_{\mS}(\bx) = \left\{
\begin{array}{c@{\,\,\,}l}
	0, & \bx \in \mS, \\
	+\infty, & \mbox{otherwise}.
\end{array}
\right.
\]

% the \emph{projection} onto $\mS$ is the set-valued mapping $\Proj_\mS:\R^{p}\rightrightarrows\R^{p}$ given by $\Proj_{\mS}(\bx) = \argmin_{\by \in \mS} \|\by-\bx\|_F$; the \emph{distance} between two non-empty closed set $\mS$ and $\mT$ is defined as $\dist(\mS,\mT)=\inf_{\bx \in \mS, \, \by \in \mT} \| \bx-\by \|_F$. 
Let $f:\R^{p} \rightarrow (-\infty,+\infty]$ be a given function with  $\dom(f)=\{\bx \in \R^{p}: f(\bx)<+\infty\}$. The function $f$ is said to be \emph{proper} if $\dom(f)\not=\emptyset$. A vector $\bv\in\R^{p}$ is said to be a \emph{Fr\'{e}chet subgradient} of $f$ at $\bx \in \dom(f)$ if
\begin{equation} \label{eq:frech-subg}
	\liminf_{\by\rightarrow\bx, \atop \by\not=\bx} \frac{ f(\by) - f(\bx) - \langle \bv,\by-\bx \rangle }{ \|\by-\bx\|_F } \ge 0.
\end{equation}
The set of vectors $\bv \in \R^p$ satisfying~\cref{eq:frech-subg} is called the \emph{Fr\'{e}chet subdifferential} of $f$ at $\bx \in \dom(f)$ and denoted by $\widehat{\partial}f(\bx)$. The \emph{limiting subdifferential}, or simply the \emph{subdifferential}, of $f$ at $\bx\in \dom(f)$ is defined as  
\[
\partial f(\bx) = \left\{
\begin{split}
	&\ \bv\in\R^{p}: \exists \bx^k\rightarrow\bx, \, \bv^k \rightarrow \bv \,\\
	&\ \mbox{ with } \, f(\bx^k)\rightarrow f(\bx), \, \bv^k\in\widehat{\partial} f(\bx^k)
\end{split} \right\}.
\] 
Then, the limiting critical point of a function can be defined as follows. 
\begin{defi}[limiting critical point]\label{def:crit}
	Suppose that the function $f:\R^{p} \rightarrow (-\infty,+\infty]$ is proper and lower semicontinuous. We say that a point $\bx\in\R^{p}$ is  a  limiting critical point of $f$ if $\b0 \in \partial f(\bx)$.
\end{defi}
Remark that by the generalized Fermat rule (see, e.g.,~\cite[Theorem 10.1]{RW04}), a local minimizer of $f$ is a limiting critical point of $f$.
By convention, if $\bx\not\in\dom(f)$, then $\partial f(\bx) = \emptyset$. The \emph{domain} of $\partial f$ is defined as $\dom(\partial f) = \{\bx \in \R^p: \partial f(\bx) \not= \emptyset\}$. The limiting subdifferential of indicator function $\delta_{\mS}: \R^{p} \rightarrow \{0,+\infty\}$ is given as follows.
\[ 
\begin{aligned}
&\ \widehat{\partial}\delta_{\mS}(\bx) = \left\{ \bv\in\R^{p}: \limsup_{\by\rightarrow\bx, \, \by\in\mS \atop \by\not=\bx} \frac{\langle \bv, \by-\bx \rangle}{\|\by-\bx\|_F} \le 0 \right\}\\
&\ \quad\mbox{and}\quad \partial\delta_{\mS}(\bx) = \mN_{\mS}(\bx), \forall \bx \in \mS,
\end{aligned}
\]
where $\mN_{\mS}(\bx)$ is the \emph{normal cone} to $\mS$ at $\bx$.

\subsection{Organization}
The rest of this paper is organized as follows. In Section \ref{sec:algorithm}, we introduce the proposed PALMe method for solving Problem \eqref{PCA:L1}. Then, we prove the sufficient decrease property and relative error property in Section \ref{subsec:SD+RE}, estimate the K\L\ exponent of Problem \eqref{PCA:L1} in Section \ref{subsec:KL}, and prove the main theorem of this work in Section \ref{sec:convergence}. In Section \ref{sec:experiment}, we conduct numerical experiments to validate our theoretical results and compare the proposed approach to the existing approaches. We end with some concluding remarks in Section \ref{sec:conc}.

\section{Algorithm Design}\label{sec:algorithm}

% In this section, we propose to solve Problem \eqref{PCA:L1} by applying the PALM method to its two-block reformulation. 
We begin by reformulating Problem \eqref{PCA:L1} as a two-block form. Noting that $|x|=\max\{x,-x\}$ for any $x \in \R$, we can reformulate Problem \eqref{PCA:L1} as 
\begin{align}\label{PCA:L1-Re}
	\min\ & H(\bP,\bQ) := -\langle \bP, \bX^T \bQ \bQ^T \rangle \\
	\mathrm{s.t.}\ & \bP \in {\mB}(n,d),\ \bQ \in {\rm St}(d,K),
\end{align}
where $\langle \bA,\bB \rangle ={\rm tr}(\bA^T \bB)$ denotes the Euclidean inner product of two matrices $\bA,\bB$ of the same size and ${\mB}(n,d)=\left\{\bP \in \R^{n \times d}: p_{ij} \in \{\pm 1\},\ i=1,\ldots,n,\ j=1,\ldots,d\right\}$ denotes the set of all $n\times d$ matrices with elements $\pm 1$. Observing that this problem has two separate blocks of variables $\bP$ and $\bQ$, one can apply the PALM method (see, e.g., \cite{attouch2013convergence,bolte2014proximal,tseng2001convergence}) for solving it.  
% It is a non-smooth and non-convex problem. We first reformulate it as follows:
%	\begin{equation}\label{eq:l1_1}
%		\begin{aligned}
%			\min_{\bQ \in {\rm St}(d,K)} -\|\bX^\top \bQ \bQ^\top\|_1=&\min_{\bQ \in {\rm St}(d,K)} -\max_{\bP \in \mB(n,d) }\langle \bP, \bX^\top \bQ \bQ^\top\rangle\\
%			&=\min_{\bQ \in {\rm St}(d,K),\bP \in {\mB}(n,d) }-\langle \bP, \bX^\top \bQ \bQ^\top\rangle,
%		\end{aligned}
%	\end{equation}
%   where $\langle \bA,\bB \rangle ={\rm tr}(\bA^\top \bB)$ and ${\mB}(n,d)=\left\{\bP \in \R^{n \times d}: \bP_{ij} \in \{\pm 1\};\ i=1,\ldots,n;\ j=1,\ldots,d\right\}$. 
%To simplify our development, we rewrite Problem \eqref{PCA:L1-Re} as 
%   \begin{equation}\label{eq:l1_2}
%   	\begin{aligned}
%   		\min_{\bQ \in {\rm R}^{d\times K}, \,\bP \in {\rm R}^{n\times d}}\left\{H(\bP,\bQ)=-\langle \bP, \bX^\top \bQ \bQ^\top\rangle+\delta_{{\mB}(n,d)}(\bP)+\delta_{{\rm St}(d,K)}(\bQ)\right\},
%   	\end{aligned}
%   \end{equation}
%   which is in a form that is amenable to the celebrated PALM method developed in \cite{bolte2014proximal}. 
% According to the algorithmic framework developed in \cite{bolte2014proximal}, 
Specifically, given the current iterate $\left(\bP^k,\bQ^k\right)\in \mB(n,d)\times {\rm St}(d,K)$, the method generates the next iterate $\left(\bP^{k+1},\bQ^{k+1}\right)\in {\mB}(n,d)\times {\rm St}(d,K)$ via
\begin{subequations}
	\begin{equation} 
		\bP^{k+1} \in \argmin  \left\{\begin{split}
			&\ -\langle \bP, \bX^T \bQ^k \bQ^{k^T}\rangle\\
			&\ +\frac{\alpha_k}{2} \|\bP-\bP^k\|^2_F:\ \bP \in \mB(n,d)
		\end{split} \right\}, \label{update:P}
	\end{equation}
	\begin{equation}
		\bQ^{k+1} \in \argmin \left\{\begin{split}
			&\ -\langle \bQ,(\bX\bP^{k+1}+\bP^{{k+1}^T} \bX^T)\bQ^k\rangle\\
			&\ +\frac{\beta_k}{2} \|\bQ-\bQ^k\|^2_F:\ \bQ \in {\rm St}(d,K)
		\end{split}\right\}\label{update:Q}, 
	\end{equation}
\end{subequations}
where $\alpha_k,\beta_k >0$ are step-size parameters. Recently, many different extrapolation techniques have been successfully applied to accelerate proximal algorithms for convex and non-convex optimization problems; see, e.g., \cite{nesterov1983method,beck2009fast,wen2018proximal,Li2015,lu2019enhanced,wang2021linear}. This motivates us to incorporate an extrapolation step into the update of $\bP$ to achieve possible acceleration of the PALM iterations. Specifically, we replace \eqref{update:P} with
\begin{equation}\label{update:P-extra}
	\begin{aligned}
		&\ \bE^k =\bQ^{k}\bQ^{k^T}+\gamma_k\left(\bQ^{k}\bQ^{k^T}-\bQ^{k-1}\bQ^{{k-1}^T}\right),\\
		&\ \bP^{k+1} \in \argmin \left\{\begin{split}
			&\ -\langle \bP, \bX^T \bE^k \rangle\\
			&\ +\frac{\alpha_k}{2} \|\bP-\bP^k\|^2_F:\ \bP \in {\mB}(n,d)
		\end{split}\right\},
	\end{aligned}
\end{equation}
where $\bE^k \in \R^{d\times K}$ is the point extrapolated from $\bQ^{k}\bQ^{k^T}$ and $\bQ^{k-1}\bQ^{{k-1}^T}$ and $\gamma_k \in [0,1)$ is the parameter controlling extrapolation step-size.   % that in contrast to the linear extrapolation in the literature, we apply a quadratic extrapolation to the PALM. % In particular, we can show that the proposed method still achieves linear convergence.  
It is worth noting that the above iterates \eqref{update:Q} and \eqref{update:P-extra} admit closed-form solutions. Specifically, the update \eqref{update:P} in its most simplified form reads as 
\begin{equation}\label{eq:l1_5}
	\bP^{k+1} \in \sign\left(\bP^k+\bX^T \bE^k /\alpha_k\right).
\end{equation}
On the other hand, the update \eqref{update:Q} is essentially an instance of the orthogonal Procrustes problem \cite{schonemann1966generalized}, whose solution is given by  
\[
\bQ^{k+1}=\bU^{k+1}\bV^{{k+1}^T},
\]
where $\bU^{k+1} \in \mathrm{St}(d,K)$ and $\bV^{k+1} \in \mathrm{St}(K,K)$ are obtained by a thin SVD $\bU^{k+1}\bSigma^{k+1}\bV^{{k+1}^T}=\bQ^k+(\bX\bP^{k+1}\bQ^k+{\bP^{k+1}}^T \bX^T \bQ^k)/\beta_k$. Now, we summarize the proposed method in Algorithm \ref{alg:palme}. One can verify that the per-iteration cost of the proposed method is $\mO(ndK+dK^2)$, which is cheap when $K \ll \min \left\{n,d\right\}$.  

\begin{algorithm}
	\caption{PALM with a quadratic extrapolation (PALMe) for L1-PCA }
	\label{alg:palme}	
	\SetAlgoLined
	\KwIn{$\bX \in \R^{d \times n},\ \bP^0 \in  {\mB}(n,d),\ \bQ^{-1}=\bQ^0 \in {\rm St}(d,K)$.}
	\For{$k=0,1,2,\cdots$}{
		choose step sizes $\alpha_k, \beta_k > 0$ and extraploation parameter $\gamma_k \in [0,1]$;\\
		set $\bE^k = \bQ^{k} \bQ^{k^T}+\gamma_k\left(\bQ^{k}\bQ^{k^T}-\bQ^{k-1}\bQ^{{k-1}^T}\right)$;\\
		pick $\bP^{k+1} \in \sign(\bP^k+\bX^T \bE^k /\alpha_k)$;\\
		compute a thin SVD $\bU^{k+1}\bSigma^{K+1}\bV^{{k+1}^T}=\bQ^k+\left(\bX\bP^{k+1}\bQ^k+\bP^{{k+1}^T} \bX^T \bQ^k\right)/\beta_k$;\\
		Set $\bQ^{k+1}=\bU^{k+1}\bV^{{k+1}^T}$;\\
		Terminate if stopping criteria are met;
	}
\end{algorithm}
 
We should emphasize that our algorithm has some essential differences with other block coordinate descent-type methods. First, most of the methods perform an extrapolation step on each block (see, e.g., \cite{le2020inertial,pock2016inertial,gao2020gauss,xu2017globally}), while PALMe only takes the extrapolation on one block $\bQ$. Experimental results in Section \ref{sec:experiment} validate the effectiveness of the latter approach for Problem \eqref{PCA:L1}. Second, although the update \eqref{update:P-extra} seems similar to that in \cite{wang2021linear}, our quadratic extrapolation step is totally new compared to the linear extrapolation step in \cite{wang2021linear}. As a result, the update of $\bQ$ is completely different. To the best of our knowledge, our work is the first one to design and analyze the quadratic extrapolation step for non-smooth and non-convex problems. 
 
\section{Convergence Analysis} 
 
Our goal in this section is to establish the convergence result of Algorithm \ref{alg:palme}. Generally, it is not easy to analyze the convergence behavior of algorithms for solving a non-smooth and non-convex problem. Fortunately, Attouch et al. \cite{attouch2013convergence} developed a unified framework that sheds light on analyzing the convergence behavior of proximal algorithms for solving non-smooth and non-convex problems. This framework has been widely used in the literature; see, e.g., \cite{wang2021linear,Liu2019QuadraticOW,lu2019enhanced,zeng2021analysis}. Specifically, one needs to verify that the sequence of iterates generated by the considered algorithm satisfies the \emph{sufficient decrease property} and the \emph{relative error property} in terms of a proper potential function. Moreover, one needs to verify that the considered potential function satisfies the \emph{K\L\ property with the associated exponent}. Note that the first two properties are algorithm-dependent, and we can directly verify them by analyzing the updates of Algorithm \ref{alg:palme}. By contrast, the last one depends on the function itself rather than the algorithm according to the definition of the K\L\ property. % Generally, deriving the K\L\ exponent is the most difficult among these tasks.% Things will be a little complicated with extrapolation steps. The extrapolation step makes the decrease property of original function unclear and it inspires the new potential function. Similar to the proof of other extrapolation algorithm \cite{wen2018proximal, pock2016inertial,wang2021linear}, we construct the potential function as \eqref{eq:potential} and then we will prove the mentioned three properties in turn 
To simplify our development, we define
\begin{align}\label{PCA:l}
	\ell(\bQ) = -\|\bQ\bQ^T\bX\|_1 + \delta_{{\rm St}(d,K)}(\bQ),
\end{align}
and
\begin{align}\label{PCA:h}
	h(\bP,\bQ) = H(\bP,\bQ) + \delta_{\mB(n,K)}(\bP) + \delta_{{\rm St}(d,K)}(\bQ). 
\end{align}  
Moreover, we can define the critical points of $\ell$ as follows.
\begin{defi}[Critical point of $\ell$]\label{def:crit-l}
	We say that $\bQ \in {\rm St}(d,K)$ is a critical point of $\ell$ if 
	\begin{align}\label{crit:l}
		\z \in & -\bX\sign(\bX^T\bQ\bQ^T)\bQ - \sign(\bQ\bQ^T\bX)\bX^T\bQ \notag \\
		& + \mN_{{\rm St}(d,K)}(\bQ).
	\end{align}
\end{defi}
Invoking the subdifferential calculus rules in~\cite[Chapter 10B]{RW04}, we remark that this is a necessary condition  for local optimality. Besides, it should be noted that every limiting critical point of $\ell$ is a critical point of $\ell$, but the converse is not known to hold.

\subsection{Sufficient Decrease and Relative Error Properties}\label{subsec:SD+RE}

To prove the sufficient decrease and relative error properties, we need to choose a proper potential function. One immediate choice of the potential function is the function $h$ defined in \eqref{PCA:h}. However, due to the extrapolation step in Algorithm \ref{alg:palme}, it is not clear how to verify the mentioned properties in terms of $h$. Motivated by the potential functions constructed in \cite{wen2018proximal, pock2016inertial,wang2021linear}, we consider the potential function $\Phi_\beta(\bP,\bQ,\bQ'):\R^{n\times d} \times \R^{d\times K} \times \R^{d\times K} \rightarrow (-\infty,+\infty]$ defined by 
\begin{equation}
	\label{eq:potential}
	\Phi_\beta(\bP,\bQ,\bQ') = h(\bP,\bQ)+ \frac{\beta}{2}\|\bQ-\bQ'\|^2_F.
\end{equation}
Then, when the step-size and the extrapolation parameters in Algorithm \ref{alg:palme} are suitably chosen, we can verify the desired properties in terms of $\Phi_\beta$ as follows. 
\begin{lemma}\label{lem:suffi-decre}
	Let $\bC^k=\left(\bP^k,\bQ^k,\bQ^{k-1}\right)$ for all $k \geq 0$. Suppose that the step-size and extrapolation parameters in Algorithm \ref{alg:palme} satisfy for all $k \ge 0$,
	%\begin{align}
	%\beta_k> 2\|\bX\bP^{k+1}\|_F+\beta_*\  \text{and} \ \alpha_k > 2\gamma_k^2,\  \forall k \ge 0,
	%\end{align}
	\begin{align}\label{pararms}
		&\ \alpha_* \le \alpha_k \le \alpha^*,\ \frac{3}{2}\beta_* + 2\|\bX\bP^k\|  \le \beta_k \le \beta^*,\\
		&\ 0 \le \gamma_k < \gamma^* = \min\left\{1,  \frac{\alpha_*\beta_*}{8\|\bX\|^2}\right\}
	\end{align}
	for some $\alpha_*,\alpha^*,\beta_*,\beta^* \in (0,\infty)$. Then, the following statements hold: \\
	(i) The sequence $\{\bC^k\}_{k\geq 0}$ is bounded.\\
	(ii) There exists a constant $\kappa_1 >0$ such that for all $k \ge 0$,
	\begin{align}\label{eq:suff-de}
		\Phi_{\beta_*}(\bC^{k+1})-\Phi_{\beta_*}(\bC^{k}) \le -\kappa_1 \|{\bC}^{k+1}-{\bC}^k\|_F^2.
	\end{align}
	(iii) There exists a constant $\kappa_2 >0$ such that for all $k \ge 0$,
	\begin{align}\label{eq:rela-err}
		\dist\left(\b0,\partial \Phi_{\beta_*}(\bC^{k+1}) \right) \le \kappa_2 \|{\bC}^{k+1}-{\bC}^k\|_F. 
	\end{align}
\end{lemma}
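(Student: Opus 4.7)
Part (i) is immediate: $\bP^k\in\mB(n,d)$ ranges over a finite set of $\pm 1$ matrices and $\bQ^k,\bQ^{k-1}\in{\rm St}(d,K)$ lie in a compact Stiefel manifold, so the sequence $\{\bC^k\}_{k\ge 0}$ is contained in a compact product space.

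The heart of the argument is part (ii). The plan is to derive two block-wise descent inequalities and then combine them so that the tail term $\tfrac{\beta_*}{2}\|\bQ-\bQ'\|_F^2$ in the potential $\Phi_{\beta_*}$ absorbs the error introduced by the quadratic extrapolation in the $\bP$-update. Starting from the optimality of $\bP^{k+1}$ in \eqref{update:P-extra}, I would first derive $-\langle\bP^{k+1}-\bP^k,\bX^T\bE^k\rangle \le -\tfrac{\alpha_k}{2}\|\bP^{k+1}-\bP^k\|_F^2$, then write $\bX^T\bQ^k\bQ^{k^T} = \bX^T\bE^k - \gamma_k\bX^T(\bQ^k\bQ^{k^T}-\bQ^{k-1}\bQ^{{k-1}^T})$ and use the identity $\bQ_1\bQ_1^T - \bQ_2\bQ_2^T = (\bQ_1-\bQ_2)\bQ_1^T + \bQ_2(\bQ_1-\bQ_2)^T$ together with $\|\bQ_i\|\le 1$ to bound $\|\bQ^k\bQ^{k^T}-\bQ^{k-1}\bQ^{{k-1}^T}\|_F \le 2\|\bQ^k-\bQ^{k-1}\|_F$. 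Applying Young's inequality with weight $\alpha_k/4$ then yields
\[
H(\bP^{k+1},\bQ^k)-H(\bP^k,\bQ^k) \le -\tfrac{\alpha_k}{4}\|\bP^{k+1}-\bP^k\|_F^2 + \tfrac{4\gamma_k^2\|\bX\|^2}{\alpha_k}\|\bQ^k-\bQ^{k-1}\|_F^2.
\]
For the $\bQ$-step, since $\bQ\mapsto H(\bP^{k+1},\bQ)$ is quadratic with Hessian operator norm bounded by $2\|\bX\bP^{k+1}\|$, a second-order Taylor expansion combined with the optimality of $\bQ^{k+1}$ in \eqref{update:Q} gives $H(\bP^{k+1},\bQ^{k+1})-H(\bP^{k+1},\bQ^k) \le -\bigl(\beta_k/2-\|\bX\bP^{k+1}\|\bigr)\|\bQ^{k+1}-\bQ^k\|_F^2$. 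Inserting both bounds into the identity
\[
\Phi_{\beta_*}(\bC^{k+1})-\Phi_{\beta_*}(\bC^k) = H(\bP^{k+1},\bQ^{k+1})-H(\bP^k,\bQ^k) + \tfrac{\beta_*}{2}\|\bQ^{k+1}-\bQ^k\|_F^2 - \tfrac{\beta_*}{2}\|\bQ^k-\bQ^{k-1}\|_F^2,
\]
the conditions in \eqref{pararms} are exactly what is needed to render the coefficients of $\|\bP^{k+1}-\bP^k\|_F^2$, $\|\bQ^{k+1}-\bQ^k\|_F^2$, and $\|\bQ^k-\bQ^{k-1}\|_F^2$ simultaneously negative, yielding \eqref{eq:suff-de}.

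Part (iii) proceeds by exhibiting explicit subgradients of $\Phi_{\beta_*}$ at $\bC^{k+1}$ whose norms are $\mO(\|\bC^{k+1}-\bC^k\|_F)$. The $\bP$-block of $\partial\Phi_{\beta_*}(\bC^{k+1})$ contains $-\bX^T\bQ^{k+1}\bQ^{{k+1}^T} + \mN_{\mB(n,d)}(\bP^{k+1})$; the optimality of $\bP^{k+1}$ in \eqref{update:P-extra} gives $\bX^T\bE^k - \alpha_k(\bP^{k+1}-\bP^k) \in \mN_{\mB(n,d)}(\bP^{k+1})$, so $\bX^T(\bE^k-\bQ^{k+1}\bQ^{{k+1}^T}) - \alpha_k(\bP^{k+1}-\bP^k)$ is a $\bP$-block subgradient of norm at most $\|\bX\|\bigl(2\|\bQ^{k+1}-\bQ^k\|_F + 2\gamma_k\|\bQ^k-\bQ^{k-1}\|_F\bigr) + \alpha^*\|\bP^{k+1}-\bP^k\|_F$. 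An analogous manipulation using the $\bQ$-update optimality produces a $\bQ$-block subgradient $-(\bX\bP^{k+1}+\bP^{{k+1}^T}\bX^T)(\bQ^{k+1}-\bQ^k) - (\beta_k-\beta_*)(\bQ^{k+1}-\bQ^k)$ whose norm is $\mO(\|\bQ^{k+1}-\bQ^k\|_F)$ by boundedness of $\bX\bP^{k+1}$ and $\beta_k\le\beta^*$, while the $\bQ'$-block gives simply $-\beta_*(\bQ^{k+1}-\bQ^k)$. Summing the three block-wise bounds delivers \eqref{eq:rela-err}.

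The main obstacle I anticipate is the $\bQ$-step descent estimate: the Lipschitz constant of $\nabla_\bQ H(\bP^{k+1},\cdot)$ involves $\|\bX\bP^{k+1}\|$, whereas \eqref{pararms} lower-bounds $\beta_k$ in terms of $\|\bX\bP^k\|$, so one must carefully argue -- either by exploiting the slack $\tfrac{3}{2}\beta_*$ or by controlling the change $\|\bX\bP^{k+1}\|-\|\bX\bP^k\|$ via the boundedness of $\mB(n,d)$ -- that the net coefficient of $\|\bQ^{k+1}-\bQ^k\|_F^2$ in the combined inequality is strictly negative. A related subtlety is calibrating $\gamma^*$ so that the extrapolation-error term $4\gamma_k^2\|\bX\|^2/\alpha_*$ is strictly dominated by $\beta_*/2$; this is what motivates the specific form of $\gamma^*$ in \eqref{pararms}.
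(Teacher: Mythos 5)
Your proposal is correct and follows essentially the same route as the paper's own proof: compactness of $\mB(n,d)\times{\rm St}(d,K)$ for (i); for (ii), the optimality of the $\bP$-subproblem \eqref{update:P-extra} plus Young's inequality and $\|\bQ^k\bQ^{k^T}-\bQ^{k-1}\bQ^{{k-1}^T}\|_F\le 2\|\bQ^k-\bQ^{k-1}\|_F$ to absorb the extrapolation error into the $\frac{\beta_*}{2}\|\bQ-\bQ'\|_F^2$ term, together with the proximal-linearized descent lemma for the $\bQ$-step \eqref{update:Q}, all combined through $\Phi_{\beta_*}$; and for (iii), the same explicit block-wise subgradients extracted from the two optimality conditions. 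The one remark worth making is that the obstacle you flag (the descent lemma needs the Lipschitz constant $\|\bX\bP^{k+1}+\bP^{{k+1}^T}\bX^T\|$ while \eqref{pararms} bounds $\beta_k$ via $\|\bX\bP^{k}\|$) is silently glossed over in the paper, which writes the $\bQ$-step inequality with $L_k=\|\bX\bP^k+\bP^{k^T}\bX^T\|$; the clean fix is exactly what you suggest, reading the condition with $\|\bX\bP^{k+1}\|$ (legitimate, since $\bP^{k+1}$ is available before $\beta_k$ is used) or with the uniform bound $\sqrt{nd}\,\|\bX\|$.
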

We defer the detailed proof to Section \ref{first-appendix} of the appendix. Thanks to the boundness of ${\mB}(n,d)$ and ${\rm St}(d,K)$, one can easily verify the boundness of the sequence $\{\bC^k\}_{k\geq 0}$ as in (i). We refer to (ii) as \emph{sufficient decrease property}, which follows from the updates in \eqref{update:Q} and \eqref{update:P-extra}, and the Lipschitz smoothness of $H(\bP,\bQ)$ on a bounded set. It ensures that the potential function $\Phi_{\beta_*}(\bC^{k})$ is monotonically decreasing. This, together with (i), implies that the sequence $\{\Phi_{\beta_*}(\bC^{k})\}_{k\ge 0}$ converges and $\|{\bC}^{k+1}-{\bC}^k\|_F$ goes to zero as $k$ increases. We refer to (iii) as \emph{relative error property}, which is also known as {\it safeguard} in \cite{zhou2017unified,LYS17}. It is proved by the optimality condition of the updates in \eqref{update:Q} and \eqref{update:P-extra}. Armed with this lemma, we can conclude that every accumulation point of $\{{\bC}^k\}_{k\ge 0}$ is a critical point of $\Phi_{\beta_*}$, which is essentially \emph{subsequence convergence}.  Remark that compared to the proof in \cite[Proposition 6]{wang2021linear}, this proof is different and more involved. Indeed, the objective function of Problem \eqref{PCA:L1-Re} is quadratic in terms of $\bQ$, while that is linear in \cite{wang2021linear}. Moreover, our extrapolation step in \eqref{update:P-extra} is non-linear, while that is linear in \cite{wang2021linear}. 

\subsection{K\L\ Property} \label{subsec:KL}

In this subsection, we verify the K\L\ property of the constructed potential function $\Phi_{\beta}$ and derive the corresponding K\L\ exponent. For the definition of the K\L\ property, we refer the reader to \cite[Definition 2.5]{attouch2013convergence} for the details. The K\L\ property  is widely used for studying convergence behavior of various first-order methods; see, e.g., \cite{attouch2010proximal, attouch2013convergence, bolte2014proximal}. It is known that the K\L\ property holds for many extended-valued lower-semicontinuous functions, such as semi-algebraic functions. According to \cite[Section 4.3]{attouch2010proximal} and \cite[Theorem 3]{bolte2014proximal}, one can easily verify that $\Phi_{\beta}$ satisfies the K\L\ property.  Here, we are more interested in the K\L\ exponent of $\Phi_{\beta}$, since it determines the convergence rate of the proposed method.   % We begin with introducing the definition of the K\L\ property with the associated  K\L\ exponent.   
\begin{defi}[K\L\ exponent]\label{def:KL-exponent}
 Suppose that $f:\R^d \rightarrow (-\infty,\infty]$ is proper and lower semicontinuous. We say that the function $f$ has a \emph{K\L\ exponent} of $\theta \in [0,1)$ at the point $\bar{\bx} \in \dom(\partial f)$ if there exist constants $\epsilon, \eta>0$, $\nu \in (0,+\infty]$ such that
	\[ \dist(\b0,\partial f(\bx)) \ge \eta( f(\bx) - f(\bar{\bx}) )^{\theta} \]
	whenever $\| \bx - \bar{\bx} \| \le \epsilon$ and $f(\bar{\bx}) < f(\bx) < f(\bar{\bx}) + \nu$.
\end{defi}	
Now, we are devoted to estimating the K\L\ exponent of $\Phi_{\beta}$. Let $\bP_1,\ldots,\bP_{2^{nd}}$ be an enumeration of the elements in $\mB(n,d)$. By definition of the $\ell_1$-norm, we can express the objective function $\ell$ of Problem \eqref{PCA:l} as the pointwise minimum of finitely many proper and lower semicontinuous functions:
\[
\ell(\bQ) = \min_{i \in \{1,\ldots,2^{nd}\}} \big\{ \underbrace{\langle \bX\bP_i, \bQ\bQ^\top \rangle + \delta_{{\rm St}(d,K)}(\bQ)}_{\ell_i(\bQ)} \big\}.
\]    
Using the same argument in \cite[Section 3.1]{wang2021linear}, we can elucidate that estimating the K\L\ exponent of $\ell$ boils down determining the K\L\ exponent of $\ell_1,\dots,\ell_{2^{nd}}$. 
%Then, it suffices to consider the K\L\ exponent of the following quadratic optimization problem with orthogonality constraint (QP-OC) 
%\begin{align}\label{QP:OC}
%\min_{\bQ \in \R^{d\times K} }\{g(\bQ) = \langle \bA, \bQ\bQ^T \rangle + \delta_{\rm St}(\bQ) \}. 
%\end{align}
Based on \cite[Theorem 1]{Liu2019QuadraticOW} and \cite[Lemma 2.1]{li2018calculus},  we can characterize the K\L\ exponent of the quadratic optimization problem with orthogonality constraint (QP-OC).% studies the quadratic optimization problem with orthogonality constraint (QP-OC) for short) and provides us with some important results.
\begin{fact}\label{fact:QPOC}
	For the QP-OC problem 
	\begin{align*}
		\min\{\langle\bA, \bQ\bB\bQ^T \rangle:\ \bQ\in {\rm St}(d,K)\},
	\end{align*}
	where $\bA \in \R^{d\times d}$ and $\bB \in \R^{K \times K}$ are symmetric, the K\L\ exponent  is $1/2$. 
\end{fact}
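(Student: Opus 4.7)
My plan is to derive the Fact by appealing to the two cited results in tandem. I would first rewrite the objective in its trace form $\langle \bA,\bQ\bB\bQ^T\rangle = \mathrm{tr}(\bQ^T\bA\bQ\bB)$, making explicit that it is a quadratic function of the matrix variable $\bQ$ with both $\bA$ and $\bB$ symmetric. The Stiefel constraint $\bQ^T\bQ = \bI_K$ is itself defined by finitely many quadratic equations. Thus the problem falls squarely within the class of quadratic optimization problems with orthogonality constraints (QP-OC) for which \cite[Theorem~1]{Liu2019QuadraticOW} establishes that the associated extended-valued or Lagrangian-type reformulation enjoys a K\L\ exponent of $1/2$.

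The next step is to transfer the K\L\ exponent from the precise form used in \cite[Theorem~1]{Liu2019QuadraticOW} to the extended-valued function $\bQ \mapsto \mathrm{tr}(\bQ^T\bA\bQ\bB) + \delta_{{\rm St}(d,K)}(\bQ)$ that actually appears in our context. This is exactly the role played by the K\L-exponent calculus result \cite[Lemma~2.1]{li2018calculus}: under mild regularity assumptions that are readily verified here (the objective is $C^{\infty}$ and the constraint manifold is smooth with a constant-rank constraint Jacobian at every feasible point), the K\L\ exponent is invariant under the passage between the two formulations. Chaining the two results then yields the claimed exponent $1/2$.

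The main point of care will be ensuring a clean match between the two-sided form $\mathrm{tr}(\bQ^T\bA\bQ\bB)$ and the precise QP-OC template covered by \cite[Theorem~1]{Liu2019QuadraticOW}. If that template admits the two-sided form directly, nothing further is needed. Otherwise, I would first reduce to a single-sided form by diagonalizing $\bB = \bV\bm{\Lambda}\bV^T$ with $\bV$ orthogonal and $\bm{\Lambda}$ diagonal, and then performing the change of variable $\tilde{\bQ} = \bQ\bV$, which preserves the Stiefel constraint since $\bV$ is orthogonal and rewrites the objective as $\sum_{i=1}^{K}\lambda_i\,\tilde{\bq}_i^T\bA\tilde{\bq}_i$, where $\tilde{\bq}_i$ denotes the $i$-th column of $\tilde{\bQ}$ and $\lambda_i$ the $i$-th diagonal entry of $\bm{\Lambda}$. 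Because right multiplication by a constant orthogonal matrix is a smooth diffeomorphism of ${\rm St}(d,K)$, it preserves K\L\ exponents by a further routine application of the calculus in \cite{li2018calculus}, so no additional difficulty arises from this reduction and the exponent $1/2$ transfers back to the original problem.
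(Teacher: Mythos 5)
Your proposal is correct and follows essentially the same route as the paper: the paper's justification of this Fact is precisely the combination of \cite[Theorem~1]{Liu2019QuadraticOW} (K\L\ exponent $1/2$ for the quadratic objective over the Stiefel manifold) with the K\L-exponent calculus of \cite[Lemma~2.1]{li2018calculus} to phrase the result for the extended-valued function $\bQ \mapsto \langle \bA,\bQ\bB\bQ^T\rangle + \delta_{{\rm St}(d,K)}(\bQ)$, and no further argument is given there. Your extra remarks on matching the two-sided form $\mathrm{tr}(\bQ^T\bA\bQ\bB)$ to the QP-OC template (e.g., diagonalizing $\bB$ and absorbing the orthogonal factor into $\bQ$) are fine but not needed in the paper's application, where the Fact is invoked with $\bB=\bI$.
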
% By \cite[Theorem 3.6]{li2018calculus}, it can be reduced to that of estimating the K\L\ exponent of $H$. Moreover, we find that their K\L\ exponents are the same.
Using \cite[Theorem 3.6]{li2018calculus}, the K\L\ exponent of $h$, and relations between $h$ and $\Phi_{\beta_*}$, we can obtain the following corollary. The detailed proof is provided in Section \ref{appendix:coro-KL} of the appendix. 
\begin{coro}\label{coro:KL}
	The K\L\ exponent of $\Phi_{\beta_*}$ is $1/2$. 
\end{coro}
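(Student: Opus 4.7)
The plan is to derive the K\L\ exponent of $\Phi_{\beta_*}$ in two stages: first bootstrapping Fact \ref{fact:QPOC} to yield K\L\ exponent $1/2$ for $h$, and then passing from $h$ to $\Phi_{\beta_*}$ via the calculus rule of \cite[Theorem 3.6]{li2018calculus}.

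For the first stage, I would fix any $(\bP^*, \bQ^*) \in \dom(h)$ and exploit the fact that $\mB(n,d) \subset \R^{n\times d}$ is a finite set of isolated points. Consequently, every sufficiently small neighborhood of $(\bP^*, \bQ^*)$ intersects $\dom(h)$ only in the slice $\{\bP^*\} \times {\rm St}(d,K)$, and the local K\L\ inequality for $h$ reduces to that of its restriction to this slice. On this slice, $h$ equals $\bQ \mapsto -\langle \bX\bP^*, \bQ\bQ^T\rangle + \delta_{{\rm St}(d,K)}(\bQ)$, which after symmetrization takes the QP-OC form $\langle \bA, \bQ\bI_K\bQ^T\rangle + \delta_{{\rm St}(d,K)}(\bQ)$ with the symmetric matrix $\bA = -\tfrac{1}{2}(\bX\bP^* + (\bX\bP^*)^T)$. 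Fact \ref{fact:QPOC} then furnishes the K\L\ exponent $1/2$ on the slice, and hence $h$ inherits K\L\ exponent $1/2$ at every point of $\dom(h)$.

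For the second stage, I would handle the auxiliary block $\bQ'$ and the coupling term $\frac{\beta_*}{2}\|\bQ-\bQ'\|_F^2$ through the calculus rule of \cite[Theorem 3.6]{li2018calculus}, which asserts (in the relevant specialization) that adding a smooth function of additional variables with K\L\ exponent $1/2$ preserves the K\L\ exponent, taking the maximum of the two exponents. To match the form required by that theorem — and to sidestep the apparent coupling in $\bQ$ — I would perform the invertible linear change of variables $(\bP,\bQ,\bQ') \mapsto (\bP,\bQ,\bY)$ with $\bY = \bQ - \bQ'$. Since the K\L\ exponent is invariant under an invertible linear bijection, it suffices to analyze $h(\bP,\bQ) + \tfrac{\beta_*}{2}\|\bY\|_F^2$, which is a separable sum of a function with K\L\ exponent $1/2$ (by Stage~1) and a smooth quadratic with K\L\ exponent $1/2$. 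The calculus rule then yields K\L\ exponent $1/2$ for $\Phi_{\beta_*}$.

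The main obstacle, in my judgment, is verifying that the precise hypotheses of \cite[Theorem 3.6]{li2018calculus} apply to our non-smooth $h$: the theorem may impose conditions (e.g., proper lower semicontinuity together with a specific subdifferential structure) that must be checked against $h$, which carries the indicator of the disjoint union $\mB(n,d) \times {\rm St}(d,K)$. A secondary technical point is the invariance under the linear change of variables: one must confirm that this invariance extends to extended-real-valued functions and that the subdifferentials of the non-smooth part transform correspondingly, so that both the function-value gap $\Phi_{\beta_*}(\bx) - \Phi_{\beta_*}(\bar\bx)$ and the quantity $\dist(\b0,\partial \Phi_{\beta_*}(\bx))$ are comparable, up to constants, to their post-transformation counterparts, thereby preserving the exponent.
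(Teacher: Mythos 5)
Your proposal is correct and is essentially the paper's own argument: reducing $h$ near a point $(\bP^*,\bQ^*)$ to the slice $\bQ\mapsto -\langle \bX\bP^*,\bQ\bQ^T\rangle+\delta_{{\rm St}(d,K)}(\bQ)$ via the isolatedness of $\mB(n,d)$ (the step the paper delegates to \cite[Section 3.3]{wang2021linear}), invoking Fact \ref{fact:QPOC} with $\bB=\bI$ on each slice, and then handling the proximal term $\tfrac{\beta_*}{2}\|\bQ-\bQ'\|_F^2$ through the K\L\ calculus of \cite{li2018calculus}. The only cosmetic difference is in the last step: \cite[Theorem 3.6]{li2018calculus} is stated precisely for functions of the form $f(x)+\tfrac{c}{2}\|x-y\|^2$, so the paper applies it to $\Phi_{\beta_*}$ directly, whereas your detour through the change of variables $(\bP,\bQ,\bQ')\mapsto(\bP,\bQ,\bQ-\bQ')$ plus the separable-sum rule is a valid but unnecessary re-derivation of that result.
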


% We would like to emphasize that out algorithm has some essential differences with other block coordinate descent-type methods \cite{le2020inertial,pock2016inertial, gao2020gauss,xu2013block,xu2017globally,wang2021linear}. To be specific, firstly, most of the  works perform an extrapolation step on each block, while PALMe only takes the extrapolation on $\bQ$. Experiments in section \eqref{sec:experiment} also validate the effectiveness of our method, that is, the one step extrapolation can highly improve the converengent performance. Secondly, although we adpot similar update as \cite{wang2021linear}, which take extrapolation for only one block, here we linearize the quadratic function and take a non-linear extrapolation.  Such modification makes the update of $\bQ$ completely different. Finally, it is worth noting that PDCe \cite{wen2018proximal} can be applied to solve problem \eqref{PCA:l}, which is one equvialent form of \eqref{PCA:h}. Although it does not admit a typical formula of PDCe, it works well and has similar convergence behavior with ours since $\alpha_k$ can be chosen sufficiently small. Nevertheless, conventional analysis for PDCe is not applicable to problem \eqref{PCA:l}. Moreover, experiments show that PALMe outerperforms PDCe in many cases. 

\subsection{Linear Convergence of Algorithm \ref{alg:palme} and Properties of Limit Points} \label{sec:convergence}
% Linear convergence of our algorithms
Equipped with Lemma \ref{lem:suffi-decre}, Corollary \ref{coro:KL}, and \cite[Theorem 2.9]{attouch2013convergence}, we are ready to prove that the sequence $\{\bC^k\}_{k\ge 0}$ converges at least linearly  to a limiting critical point of $\Phi_{\beta_*}$. However, our goals are to prove the convergence rate of Algorithm \ref{alg:palme} and figure out under what condition the limit point of the sequence $\{\bQ^k\}$ is a critical point of Problem \eqref{PCA:L1}. To tackle the issues, we need to characterize the relationships among $\ell$, $h$, and $\Phi_{\beta}$. % we are ready to deduce the convergence behavior of the sequence $\{(\bP^k,\bQ^k)\}_{k\ge 0} $ generated by Algorithm \ref{alg:palme}. % Up to now, we have introduced the following functions: $\ell$, $h$, and $\Phi_{\beta^*}$. To proceed, we need to characterize the relationships among them.  
\begin{lemma}\label{relat:h-l}
	Let $\beta > 0$ be given and $\ell$, $h$, and $\Phi_{\beta}$ be respectively defined in \eqref{PCA:l}, \eqref{PCA:h}, and \eqref{eq:potential}. Then, the following statements hold: \\
	(i) If $(\bP,\bQ)$ with $\bP \in \sign(\bX^T\bQ\bQ^T)$ is a limiting critical point of $h$, then $\bQ \in {\rm St}(d,K)$ is a critical point of $\ell$.  \\
	(ii) Suppose that $(\bP,\bQ,\bQ^\prime) \in \mB(n,d) \times {\rm St}(d,K) \times {\rm St}(d,K)$ is a limiting critical point of $\Phi_\beta$. Then, we have $\bQ = \bQ^\prime$. Moreover, $(\bP,\bQ,\bQ)$ is a limiting critical point of $\Phi_\beta$ if and only if $(\bP,\bQ)$ is a limiting critical point of $h$.  
	%Suppose that $\alpha$ satisfies
	%	\begin{equation}\label{eq:alpha}
	%		0 <  \alpha < \min\{ |(\bX^T\bQ\bQ^{T})_{ij}| : (\bX^T\bQ \bQ^{T})_{ij} \neq 0, \, i=1,\ldots,n, \, j=1,\ldots,d\}.
	%	\end{equation}
	%Then, $(\bP,\bQ)$ is a limiting critical point of $h$ if and only if $\bQ$ is a critical point of problem $\ell$. 
\end{lemma}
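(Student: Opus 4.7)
The plan is to prove both parts by direct subdifferential calculations, exploiting the fact that $H$ is continuously differentiable and that the only non-smooth ingredients in $h$ and $\Phi_\beta$ are indicators of $\mB(n,d)$ and ${\rm St}(d,K)$, which act in a block-separable way.

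For part (i), I first compute the partial gradients of $H$. Writing $H(\bP,\bQ) = -\langle \bP, \bX^T\bQ\bQ^T\rangle = -\mathrm{tr}(\bQ^T\bP^T\bX^T\bQ)$ gives
\[
\nabla_\bP H = -\bX^T\bQ\bQ^T, \qquad \nabla_\bQ H = -(\bP^T\bX^T+\bX\bP)\bQ.
\]
Because $H$ is smooth and the two indicators act on different blocks, the standard sum and product rules for subdifferentials (\cite[Chapter 10B]{RW04}) yield
\[
\partial h(\bP,\bQ) = \bigl(\nabla_\bP H + \mN_{\mB(n,d)}(\bP)\bigr)\times\bigl(\nabla_\bQ H + \mN_{{\rm St}(d,K)}(\bQ)\bigr).
\]
Thus $\b0\in\partial h(\bP,\bQ)$ forces in particular $\b0 \in -(\bP^T\bX^T+\bX\bP)\bQ + \mN_{{\rm St}(d,K)}(\bQ)$. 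Using the hypothesis $\bP \in \sign(\bX^T\bQ\bQ^T)$ together with the entrywise identity $\sign(M)^T = \sign(M^T)$, so that $\bP^T \in \sign(\bQ\bQ^T\bX)$, this inclusion becomes exactly the critical-point condition \eqref{crit:l} in Definition~\ref{def:crit-l}.

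For part (ii), the coupling $(\beta/2)\|\bQ-\bQ'\|_F^2$ is continuously differentiable in $(\bQ,\bQ')$ and interacts with no indicator. Viewing $h$ as a function on the enlarged space that is constant in $\bQ'$ and applying the smooth-nonsmooth sum rule once more, I expect
\[
\partial \Phi_\beta(\bP,\bQ,\bQ') = \partial_\bP h(\bP,\bQ)\times \bigl(\partial_\bQ h(\bP,\bQ) + \beta(\bQ-\bQ')\bigr)\times \{\beta(\bQ'-\bQ)\}.
\]
The $\bQ'$-block of $\b0\in\partial \Phi_\beta$ immediately forces $\bQ=\bQ'$, proving the first assertion. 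Once $\bQ=\bQ'$ is substituted back, the extra terms $\pm\beta(\bQ-\bQ')$ vanish and the remaining inclusions reduce verbatim to $\b0\in\partial h(\bP,\bQ)$, which delivers both directions of the claimed equivalence simultaneously.

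The main obstacle is justifying the block/product form of $\partial h$ and $\partial \Phi_\beta$ and the applicability of the sum rule at every point under consideration. Both become routine once one observes that $\delta_{\mB(n,d)}+\delta_{{\rm St}(d,K)}$ is genuinely separable across $\bP$ and $\bQ$ and that $H$ (as well as the quadratic coupling) is $C^1$, so no further qualification condition is required. A secondary subtlety worth addressing in the write-up is that $\sign(\cdot)$ is set-valued at zero entries, but this is resolved cleanly because the hypothesis in (i) explicitly fixes one witness $\bP \in \sign(\bX^T\bQ\bQ^T)$, which is precisely the kind of element Definition~\ref{def:crit-l} requires to certify the set-valued inclusion.
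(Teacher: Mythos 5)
Your proposal is correct and follows essentially the same route as the paper: the paper likewise invokes the block-separable structure (via \cite[Proposition 2.1]{attouch2010proximal}) to write $\partial h$ and $\partial \Phi_{\beta}$ in the product forms \eqref{parti:h} and \eqref{parti:phi}, reads off $\bQ=\bQ'$ from the $\bQ'$-block, and uses the witness $\bP \in \sign(\bX^T\bQ\bQ^T)$ (with $\bP^T \in \sign(\bQ\bQ^T\bX)$) to land in the inclusion \eqref{crit:l}. Your write-up is simply a more explicit version of the same argument, so no changes are needed.
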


We defer the detailed proof to Section \ref{second-appendix} of the appendix. Armed with this lemma, \ref{lem:suffi-decre}, Corollary \ref{coro:KL}, and \cite[Theorem 2.9]{attouch2013convergence}, we can establish the main theorem  of this work, which provides convergence analysis of the proposed method.
\begin{thm}\label{thm:2}
	Let $\{(\bP^k,\bQ^k)\}_{k\ge0}$ be the sequence of iterates generated by Algorithm \eqref{alg:palme} under the conditions (i) $\alpha_* \leq \alpha_k \leq \alpha^*$ for some $\alpha^*, \alpha_* \in (0,+\infty)$, (ii) $3\beta_*/2 + 2\|\bX\bP^k\|  \le \beta_k \le \beta^*$ for some $\beta^*,\beta_* \in (0,+\infty)$, and (iii) $\gamma_k$ satisfy $0 \le \gamma_k < \gamma^* = \min\left\{1,  \alpha_*\beta_*/({8\|\bX\|^2})\right\}$. Then, the sequence $\{(\bP^k,\bQ^k)\}_{k\ge0}$ converges at least linearly to a limiting critical point $(\bP^*,\bQ^*)$ of Problem \eqref{PCA:L1-Re}. Moreover, if $\alpha_k=\alpha_*$ for  all $k \geq 0$ such that
	\begin{equation}\label{alpha:low}
		0 <  \alpha_*  < \min\left\{\begin{split}
			&\  |(\bX^T\bQ^*\bQ^{*\top})_{ij}| :(\bX^T\bQ^*\bQ^{*T})_{ij} \not= 0, \\
			&\  \quad i=1,\dots,n, \, j=1,\dots,d
		\end{split}\right\},
	\end{equation}
	then $ \bQ^*$ is a critical point of Problem \eqref{PCA:L1}. 
\end{thm}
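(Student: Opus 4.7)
The plan is to apply the convergence framework of Attouch et al.\ (Theorem 2.9 of \cite{attouch2013convergence}) to the potential function $\Phi_{\beta_*}$. Lemma \ref{lem:suffi-decre} supplies the three ingredients that framework requires: boundedness of $\{\bC^k\}$, sufficient decrease, and a relative-error bound in terms of $\|\bC^{k+1}-\bC^k\|_F$. Corollary \ref{coro:KL} further tells me that the K\L\ exponent of $\Phi_{\beta_*}$ equals $1/2$. Together these yield that $\{\bC^k\}$ converges at a linear rate to some $\bC^* = (\bP^*, \bQ^*, \bQ^{*\prime})$ that is a limiting critical point of $\Phi_{\beta_*}$. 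Lemma \ref{relat:h-l}(ii) then forces $\bQ^{*\prime} = \bQ^*$ and certifies that $(\bP^*, \bQ^*)$ is a limiting critical point of $h$, hence of Problem \eqref{PCA:L1-Re}; linear convergence of the iterates $\{(\bP^k,\bQ^k)\}$ follows because they are a projection of $\{\bC^k\}$.

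For the second assertion, I would invoke Lemma \ref{relat:h-l}(i), which reduces the claim that $\bQ^*$ is a critical point of Problem \eqref{PCA:L1} to verifying $\bP^* \in \sign(\bX^T \bQ^* \bQ^{*T})$ entrywise. Because $\{\bP^k\} \subset \mB(n,d)$ converges in a finite discrete set, it must be eventually constant: $\bP^k = \bP^*$ for all $k$ sufficiently large. For such $k$, the update rule $\bP^{k+1} \in \sign(\bP^k + \bX^T \bE^k / \alpha_*)$ specialises to $\bP^* \in \sign(\bP^* + \bX^T \bE^k / \alpha_*)$. Since $\bQ^{k-1}, \bQ^k \to \bQ^*$ and $\gamma_k \in [0,\gamma^*)$ is bounded, I have $\bE^k \to \bQ^* \bQ^{*T}$, so $(\bX^T \bE^k)_{ij}/\alpha_* \to (\bX^T \bQ^* \bQ^{*T})_{ij}/\alpha_*$ for every entry $(i,j)$.

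I would then split entrywise into two cases. If $(\bX^T \bQ^* \bQ^{*T})_{ij} = 0$, the perturbation $(\bX^T \bE^k)_{ij}/\alpha_*$ eventually has magnitude less than $|\bP^*_{ij}| = 1$, so the sign of the bracket coincides with $\bP^*_{ij}$; since $\sign(0) = \{-1,+1\}$, trivially $\bP^*_{ij} \in \sign((\bX^T \bQ^* \bQ^{*T})_{ij})$. If $(\bX^T \bQ^* \bQ^{*T})_{ij} \ne 0$, the upper bound in \eqref{alpha:low} guarantees $|(\bX^T \bQ^* \bQ^{*T})_{ij}|/\alpha_* > 1$, so the perturbation dominates $\bP^*_{ij}$ in magnitude and inherits its sign from $(\bX^T \bQ^* \bQ^{*T})_{ij}$; consequently $\bP^*_{ij} = \sign((\bX^T \bQ^* \bQ^{*T})_{ij})$. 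Combining the two cases yields $\bP^* \in \sign(\bX^T\bQ^*\bQ^{*T})$ as needed, and Lemma \ref{relat:h-l}(i) then gives the conclusion.

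The main delicate point is this sign-matching argument: without the threshold on $\alpha_*$, an entry of $\bX^T \bQ^* \bQ^{*T}$ that is small but nonzero could be overwhelmed by $\bP^*_{ij}$, leaving $\bP^*_{ij}$ inconsistent with $\sign((\bX^T \bQ^* \bQ^{*T})_{ij})$ and severing the link between critical points of $h$ and critical points of $\ell$. I also rely implicitly on the fact that convergence in the discrete factor $\mB(n,d)$ is eventual equality, which lets me pass to the limit inside the set-valued $\sign$ map. Once these points are secured, the remainder of the proof is a direct composition of Lemmas \ref{lem:suffi-decre} and \ref{relat:h-l} with Corollary \ref{coro:KL} and the Attouch-Bolte-Svaiter K\L\ recipe, with the linear rate being an immediate consequence of the exponent $1/2$.
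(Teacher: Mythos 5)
Your proposal is correct and follows essentially the same route as the paper: linear convergence of $\{\bC^k\}$ via Lemma \ref{lem:suffi-decre}, Corollary \ref{coro:KL} and the K\L-based framework, passage to $(\bP^*,\bQ^*)$ through Lemma \ref{relat:h-l}(ii), and for the second claim the $\bP$-update together with the bound \eqref{alpha:low} to conclude $\bP^* \in \sign(\bX^T\bQ^*\bQ^{*T})$. The only cosmetic difference is that you argue entrywise at large $k$ using eventual constancy of $\bP^k$ and then cite Lemma \ref{relat:h-l}(i), whereas the paper first passes to the limit to get $\bP^* \in \sign\left(\bP^*+\bX^T\bQ^*\bQ^{*T}/\alpha_*\right)$, uses the inclusion $\sign\left(\bP^*+\bX^T\bQ^*\bQ^{*T}/\alpha_*\right) \subseteq \sign\left(\bX^T\bQ^*\bQ^{*T}\right)$, and writes out the criticality condition of $\ell$ directly.
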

\begin{proof}
	Let $\bC^k=\left(\bP^k,\bQ^k,\bQ^{k-1}\right)$ for all $k \geq 0$. It follows from Lemma \ref{lem:suffi-decre}, Corollary \ref{coro:KL}, and \cite[Theorem 1]{bolte2014proximal} that the sequence $\{\bC^k\}_{k \ge 0}$ converges at least linearly to a limiting critical point $\left(\bP^*,\bQ^*,\bQ^*\right)$ of $\Phi_{\beta_*}$. This, together with (ii) of Lemma \ref{relat:h-l}, gives that the sequence $\{(\bP^k,\bQ^k)\}_{k\ge 0}$ converges at least linearly to the limiting critical point $(\bP^*,\bQ^*)$ of $h$.  
	
	Suppose that $\alpha_k = \alpha_*$ for all $k \ge 0$. According to \eqref{update:P-extra} and \eqref{eq:l1_5}, we have $\bE^k = \bQ^{k}\bQ^{k^T}+\gamma_k (\bQ^{k}\bQ^{k^T}-\bQ^{k-1}\bQ^{{k-1}^T})$ and $\bP^{k+1} \in \sign(\bP^k + \bX^T\bE^k/\alpha_k)$.  This, together with $(\bP^k,\bQ^k) \rightarrow (\bP^*,\bQ^*)$ and $\alpha_k = \alpha_*$, yields that  
	\begin{align*}
		\bP^* \in \sign(\bP^*+\bX^T\bQ^*\bQ^{*^T}/\alpha_*). 
	\end{align*}
	Combining this with the result that $(\bP^*,\bQ^*)$ is a limiting critical point of $h$ gives  
	\begin{equation*}
		\begin{split}
			\z \in &\ -\bX\sign(\bP^*+\bX^T\bQ^*\bQ^{*^T}/\alpha_*)\bQ^*\\
			&\ - \sign(\bP^{*^T}+\bQ^*\bQ^{*^T}\bX^T/\alpha_*)\bX\bQ^* + \mN_{{\rm St}(d,K)}(\bQ^*).
		\end{split} 
	\end{equation*}
	In particular, noting that $\bP^* \in \mB(n,d)$ and $\alpha_*$ satisfies \eqref{alpha:low}, we have $\sign(\bP^*+\bX^T\bQ^*\bQ^{*^T}/\alpha_*) \subseteq \sign( \bX^T\bQ^*\bQ^{*^T} )$. These further implies 
	\begin{equation*}
		\begin{split}
			\z \in &\ -\bX\sign( \bX^T\bQ^*\bQ^{*^T} )\bQ^* - \sign( \bQ^*\bQ^{*^T}\bX^T )\bX^T\bQ^* \\
			&\ + \mN_{{\rm St}(d,K)}(\bQ^*).
		\end{split}
	\end{equation*}
	Then, we complete the proof. 
\end{proof}
% the iterates of our proposed algorithms can linearly converges to the limiting stationary points. 
%\begin{thm}\label{thm:1}
%	The KL exponents of $\ell$ and $h$ are both $1/2$. 
%\end{thm}
%Theorem \eqref{thm:1} is crucial for proving the convergence rate and such result is related to the "error bound" property and has been emphasized in \cite{bolte2010characterizations, li2018calculus,Liu2019QuadraticOW}. It characterizes the growth behavior around the limiting critical point and usually, it is hard to explicitly know the KL exponents. Armed with the $\frac{1}{2}$ KL exponents and then utilizing the frame work proposed by Hedy Attouch \cite{attouch2013convergence,attouch2010proximal}, we can prove the linear convergence of the iterates. Theorem \eqref{thm:2} summarizes these convergence results as follows:

The first part of this theorem  demonstrates that under some conditions imposed on step-size and extrapolation parameters, the iterates generated by Algorithm \ref{alg:palme} linearly converges to a limiting critical point of the reformulated L1-PCA problem \eqref{PCA:L1-Re}. The conditions imposed on $\alpha_k,\beta_k$ and $\gamma_k$ are introduced by proving the sufficient decreasing of objective function value. Interestingly, we found that in experiments, these parameters can be chosen flexibly even if they may violate aforementioned conditions. We leave this theory-practice gap for further research. % Now, we find the iterates converge to limiting critical point of \eqref{PCA:L1-Re}, one may ask: which point they achieve for original L1-PCA problem \eqref{PCA:L1}? 
The second part of Theorem \ref{thm:2} shows that the condition \eqref{alpha:low} is sufficient to guarantee that the point returned by Algorithm \ref{PCA:L1} is not only a limiting critical point of Problem \eqref{PCA:L1-Re}, but also a critical point of Problem \eqref{PCA:L1}. It is worth noting that we can efficiently verify whether this condition holds after obtaining a limit point $\bQ^*$.

\section{Experiment results}\label{sec:experiment}

% In this section, we conduct numerical experiments on synthetic and real datasets to demonstrate the efficacy of our proposed algorithm and to show the significance of studying the rotationally invariant L1-PCA model \eqref{PCA:L1}. 
In this section, we conduct numerical experiments on synthetic and real datasets to demonstrate the efficacy of our studied approach. We first compare the proposed method with other existing ones for solving Problem \eqref{PCA:L1} in terms of convergence performance and solution quality in Section \ref{subsec:conv-perf} and clustering accuracy in Section \ref{subsec:clus-accu}. In Section \ref{subsec:image-re}, we then compare the image reconstruction performance of our studied approach with the surveyed other L1-PCA approaches in Section \ref{sec:intr}.
All the experiments are performed on a PC running Windows 10 with an Intel\textsuperscript{\textregistered} Core\texttrademark\, i5-8600 3.10GHz CPU and 16GB memory. Our codes are implemented in MATLAB R2021a and  can be found at \url{https://github.com/TaoliZheng/RIL1PCA}. 

\begin{table*}[t]
	\caption{Step-size parameters of the tested methods}
	\label{table-1}\vspace{-0.15in}
	\begin{center}
		\begin{tabular}{c | c  c  c  c c c c r}
			\hline
			\footnotesize $(\alpha,\beta )$ & \footnotesize PALMe & \footnotesize PALM & \footnotesize iPALM & \footnotesize GiPALM  & \footnotesize pDCAe \\
			\hline
			\footnotesize synthetic $(n,d) = (5000,1000)$
			& \footnotesize $(10^{-7}, 100)$ & \footnotesize $(10^{-7}, 10)$ & \footnotesize $(10^{-7}, 10)$ & \footnotesize $(10^{-7}, 100)$ & \footnotesize $(-, 0.05)$  \\ 
			\footnotesize synthetic  $(n,d) = (1000,5000)$ & \footnotesize $(10^{-6}, 1)$ & \footnotesize $(10^{-6}, 10)$ & \footnotesize $(10^{-6}, 0.1)$ & \footnotesize $(10^{-6}, 1)$ & \footnotesize $(-, 0.1)$   \\
			\footnotesize \emph{colon-cancer} $(n,d) = (62,2000)$ & \footnotesize $(10^{-10}, 100)$ & \footnotesize $(10^{-10}, 500)$ & \footnotesize $(10^{-10}, 500)$ & \footnotesize $(10^{-10}, 100)$ & \footnotesize $(-, 100)$   \\ 
			\hline
		\end{tabular}
	\end{center}
\end{table*}

\begin{table*}[!htbp]
	\caption{TEV of the tested methods}
	\label{table-2}\vspace{-0.15in}
	\begin{center}
		\begin{tabular}{c | c  c  c  c c c c r}
			\hline
			\footnotesize $(\alpha,\beta )$ & \footnotesize PALMe & \footnotesize PALM & \footnotesize iPALM & \footnotesize GiPALM  & \footnotesize pDCAe \\
			\hline
		  \footnotesize synthetic  $(n,d) = (5000,1000)$ & \footnotesize \textbf{0.978176} & \footnotesize 0.973894 & \footnotesize 0.973955 & \footnotesize 0.974290 & \footnotesize 0.973938 \\ 
			 \footnotesize  synthetic  $(n,d) = (1000,5000)$ & \footnotesize \textbf{0.955969} & \footnotesize  0.940970 & \footnotesize 0.940904 & \footnotesize 0.943625 & \footnotesize 0.941027   \\
			\emph{colon-cancer}  $(n,d) = (62,2000)$ & \footnotesize 0.925389 & \footnotesize 0.921079 & \footnotesize 0.921001 & \footnotesize \textbf{0.928077} & \footnotesize 0.924373  \\ 
			\hline
		\end{tabular}
	\end{center}
\end{table*}

\begin{figure*}[!htbp]
	\begin{subfigure}[b]{0.3\textwidth}
		\centering
		\includegraphics[width=\textwidth]{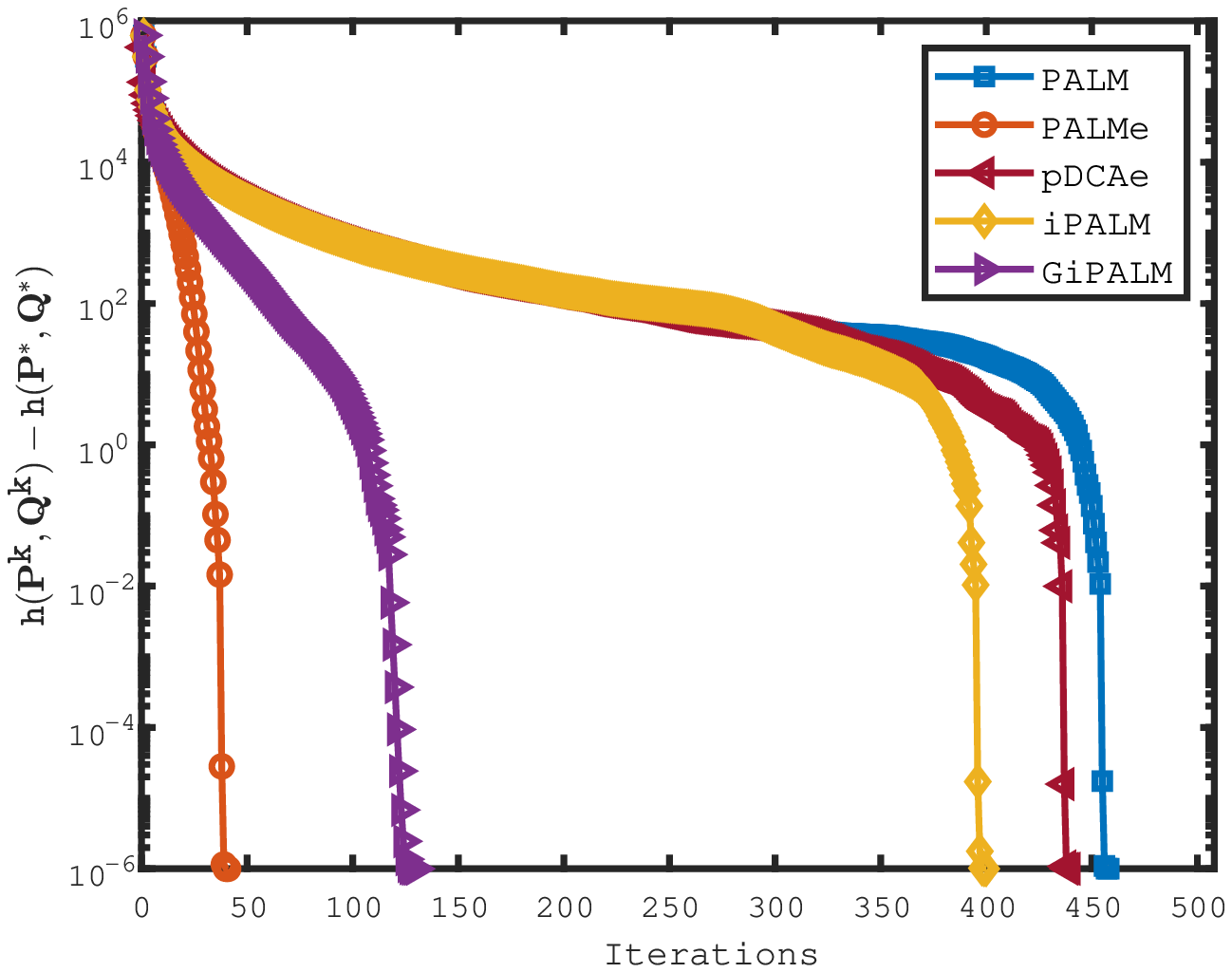}
		\caption{synthetic:  $(n,d)=(1000,5000)$}
	\end{subfigure}
	\begin{subfigure}[b]{0.3\textwidth}
		\centering
		\includegraphics[width=\textwidth]{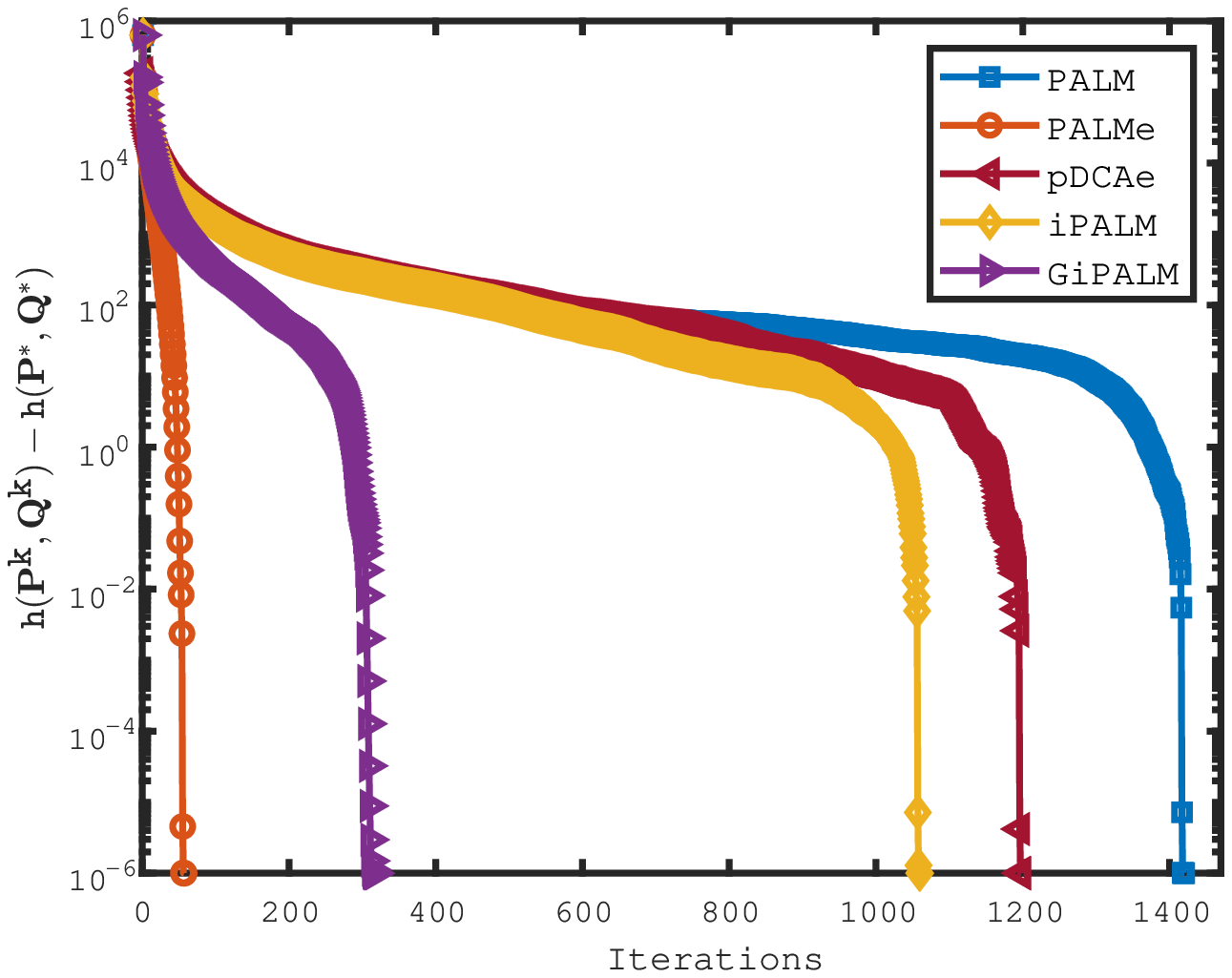}
		\caption{synthetic:  $(n,d)=(5000,1000)$}
	\end{subfigure}
    \begin{subfigure}[b]{0.3\textwidth}
    	\centering
    	\includegraphics[width=\textwidth]{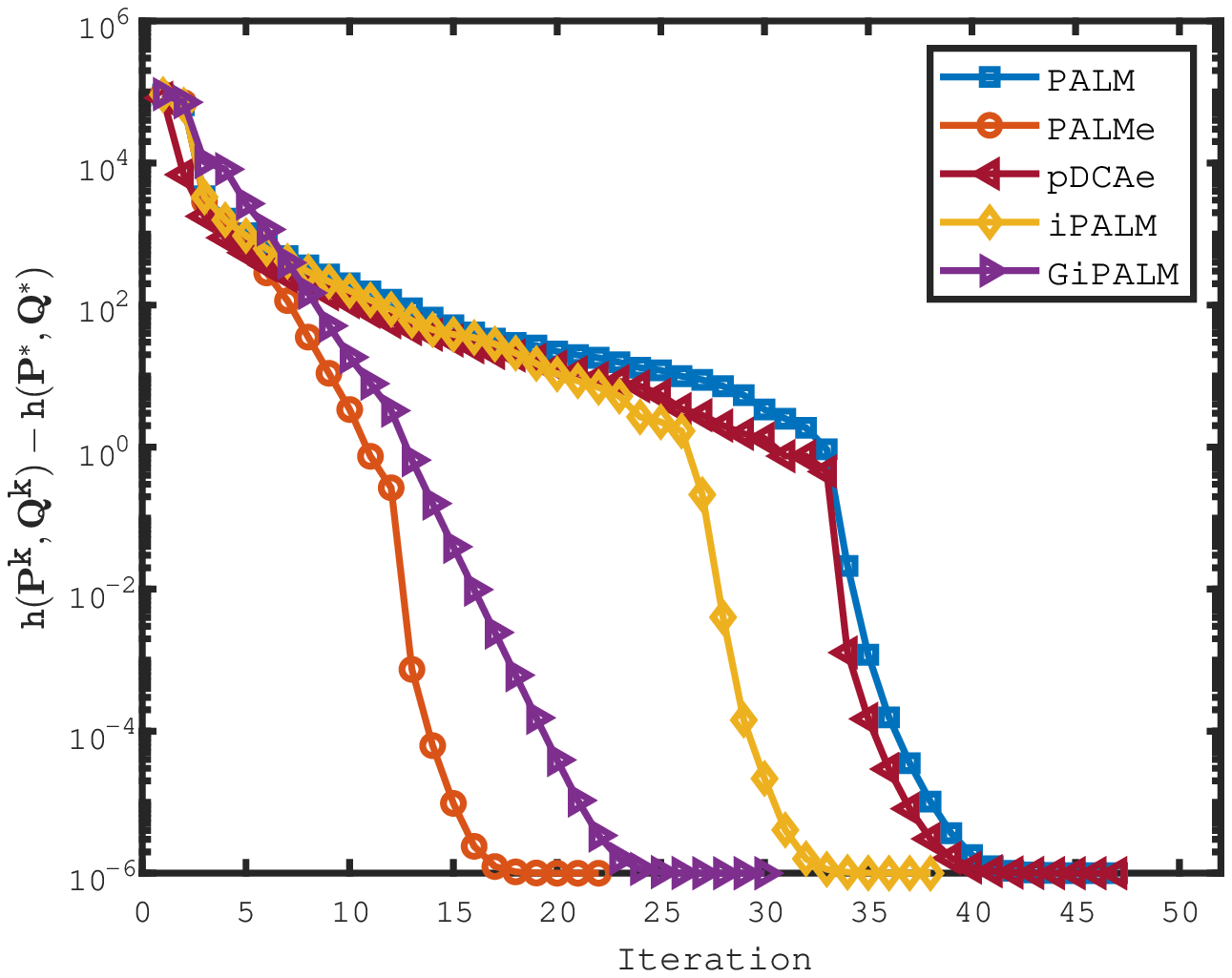}
    	\caption{real \emph{colon-cancer}: $(n,d) = (62,2000)$}
    \end{subfigure}
    \caption{Convergence performance of function values: The $x$-axis is number of iterations, and the $y$-axis is function value gap $h(\bP^k,\bQ^k)-h(\bP^*,\bQ^*)$, where $(\bP^*,\bQ^*)$ is the last iterate of the tested method. }
	\label{fig:1}
\end{figure*}

\subsection{Convergence Performance and Solution Quality}\label{subsec:conv-perf}

We first examine the convergence performance and solution quality of the proposed method on synthetic and real  datasets. We also compare it with some existing methods that can be applied to solve Problem \eqref{PCA:L1} or its reformulation \eqref{PCA:L1-Re}, which includes the standard PALM in \cite{bolte2014proximal}, the inertial proximal alternating linearized minimization (iPALM) method in \cite{pock2016inertial}, the Gauss-Seidel-type iPALM (GiPALM) method in \cite{gao2020gauss}, and the proximal difference-of-convex with extrapolation (pDCAe) method in \cite{wen2018proximal}. As demonstrated in \cite{kim2019simple,wang2021linear}, we can use the following measure named \emph{total explained variation} (TEV) to 
compare the quality of solutions returned by the tested methods: 
\begin{align*}
	{\rm TEV} = \frac{\|\bX^T\bQ\|_F^2}{\|\bX^T\bar{\bQ}\|_F^2},
\end{align*} 
where $\bQ \in \R^{d\times K}$ is the solution returned by the tested method and $\bar{\bQ} \in \R^{d\times K}$ is the matrix formed by the eigenvectors associated with the leading $K$ eigenvalues of  $\bX\bX^T$. In general, it
is assumed that the larger TEV leads to the better solution.

In the tests, we employ the fixed effect model in \cite{baccini1996l1} to generate the synthetic data. We use the same approach as that in \cite{wang2021linear} to generate a $K$-dimensional subspace and a Laplacian noise with mean  zero and variance $\sigma^2$.  As a result, the generated data points lie near the  subspace perturbed by the noise.   We set $\sigma=0.5, K=50$ and generate two synthetic datasets with dimensions $\left(n,d\right)=\left(1000,5000\right)$ and $\left(n,d\right)=\left(5000,1000\right)$, respectively. As for the real dataset, we use the dataset \emph{colon-cancer} downloaded from LIBSVM \cite{chang2011libsvm}\footnote{\url{https://www.csie.ntu.edu.tw/~cjlin/libsvmtools/datasets/}\label{web}} with dimensions $\left(n,d\right)=\left(62,2000\right)$  and set $K=20$. 

\begin{figure*}[!htbp]
	\begin{subfigure}[b]{0.3\textwidth}
		\centering
		\includegraphics[width=\textwidth]{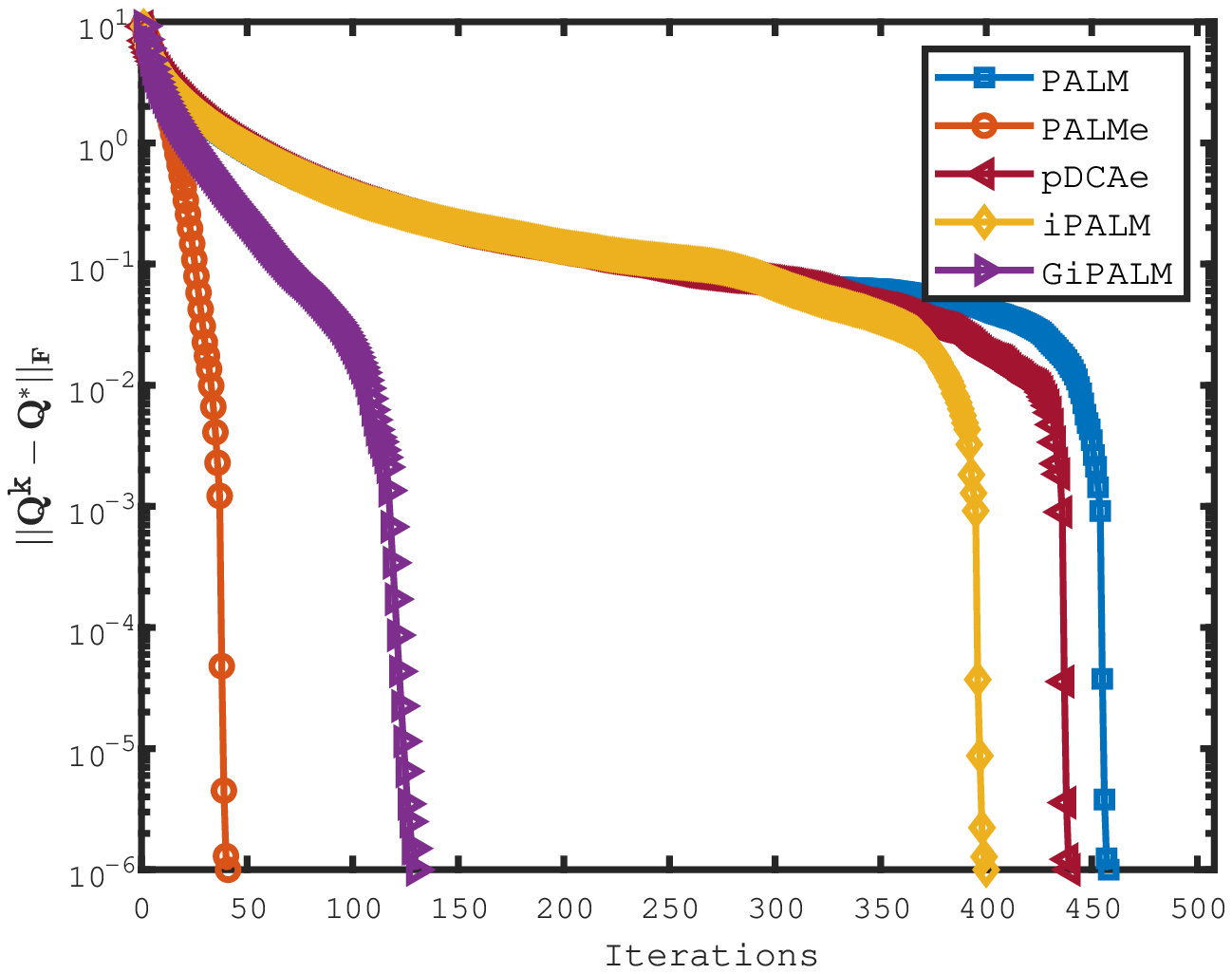}
		\caption{synthetic: $(n,d)=(1000,5000)$}
	\end{subfigure}
	\begin{subfigure}[b]{0.3\textwidth}
		\centering
		\includegraphics[width=\textwidth]{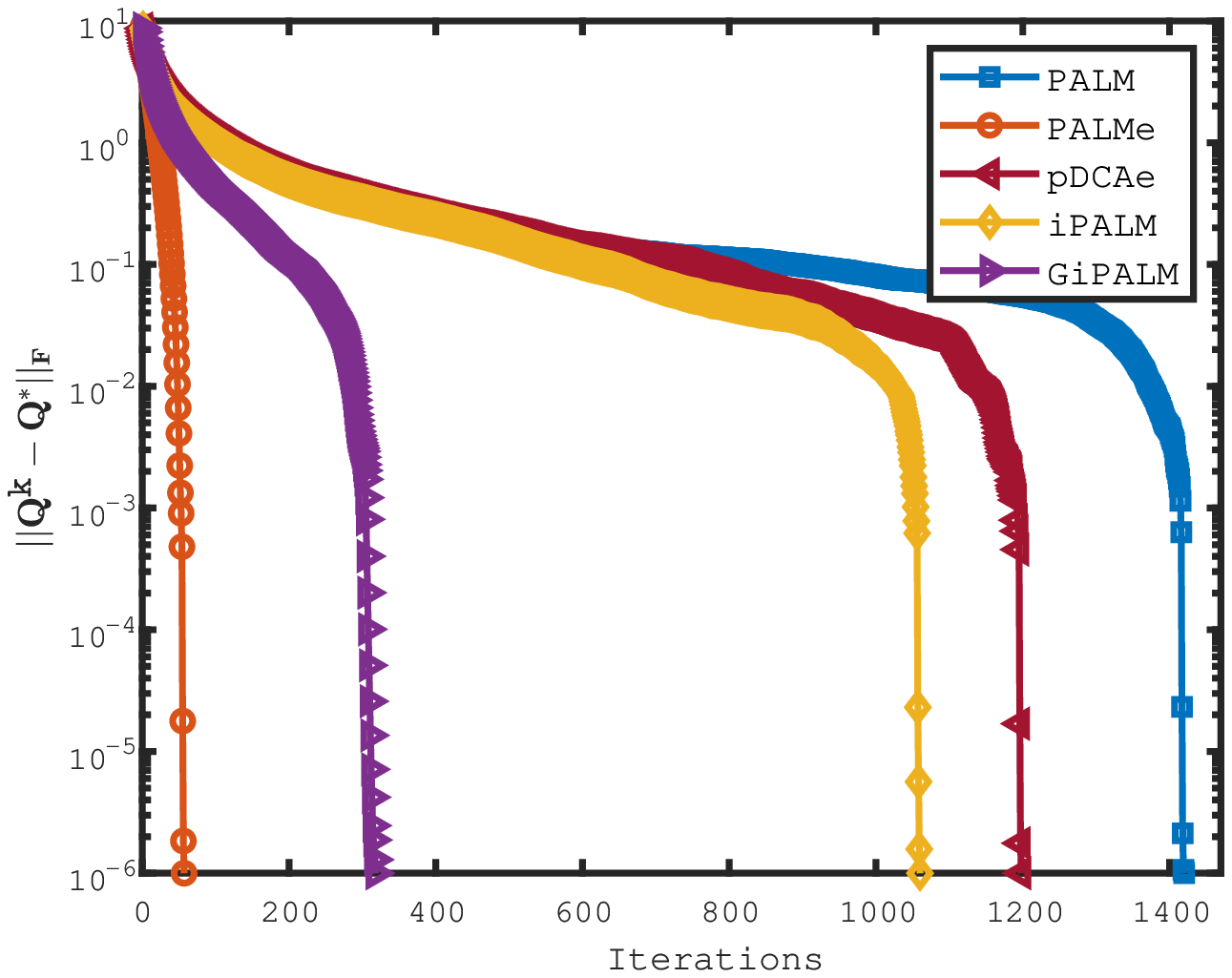}
		\caption{synthetic: $(n,d)=(5000,1000)$}
	\end{subfigure}
    \begin{subfigure}[b]{0.3\textwidth}
    	\centering
    	\includegraphics[width=\textwidth]{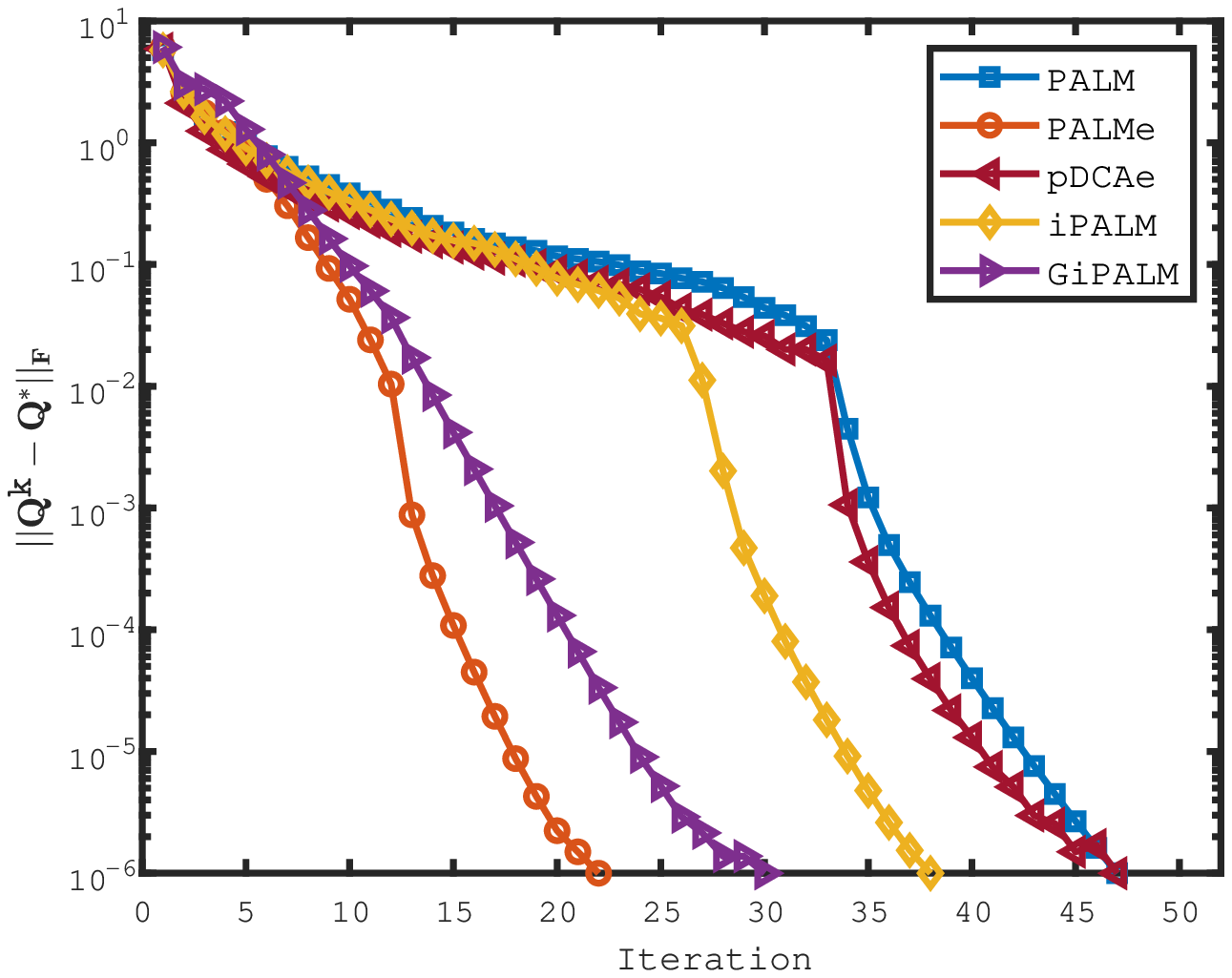}
    	\caption{\emph{colon-cancer}: $(n,d)= (62,2000)$}
    \end{subfigure}
	\caption{Convergence performance of iterates: The $x$-axis is number of iterations, and the $y$-axis is iterate gap $\|\bQ^k-\bQ^*\|$, where $\bQ^*$ is the last iterate of the tested method.}
	\label{fig:2}
\end{figure*}

We then present the setting of parameters for different algorithms in the tests. To be fair, all the parameters are tuned based on numerous runs to balance the solution quality and running time. We first specify the step-size parameters of the tested methods. We set $(\alpha_k,\beta_k)=(\alpha,\beta)$  for all $k \ge 0$, whose values are provided in Table \ref{table-1}. Note that it is not required to choose $\alpha_k$ for pDCAe since it only has one block of variables. We next specify the extrapolation parameters of the tested methods. For PALMe, we set it as $1$. We observe that it works surprisingly well, even though such a choice could violate the condition in Theorem \ref{thm:2}. For pDCAe, we set the extrapolation parameter as 0.2. For iPALM, we set the extrapolation parameters for updating the block variables $\bP$ and $\bQ$ both as 0.2. For GiPALM, these two parameters are respectively set as $1/2$ and $1/4$  for all $k \ge 0$. In each test, we employ the same starting point for the tested algorithms. For each algorithm, we terminate it when the Frobenious norm of the difference between two consecutive iterates is less than $10^{-6}$. 

% They are reported in table \eqref{table:1} and table \eqref{table:2}, respectively. Although according to theorem \eqref{thm:2}, $\beta$ may vary at each step to satisfy the sufficient decrease property, experiments found that the constant one can also have a relatively good performance. To be fair, we will carefully choose the parameters for all methods. They are tuned based on pilot runs to balance the solution quality and execution time. Next we will specify the extrapolation parameters for all methods. The parameters for PALMe is $1$. Even if it violets the assumption in theorem \eqref{thm:2}, it works efficiently. For iPALM, we set it as $\frac{k-1}{k+2}$, $\forall k\geq0$ for the update of $P$ and $Q$. The extrapolation parameters for $P$ and $Q$ is $\frac{1}{2}$ and $\frac{1}{4}$, respectively. The fixed restart scheme proposed in \cite{wen2018proximal} is adopted for updating $Q$ in algorithm PDCAe. When the difference between two iterates is less than $1e^{-8}$, we stop the algorithms.

To compare the convergence performance of the tested algorithms, we plot the the function value gap $h(\bP^k,\bQ^k)-h(\bP^*,\bQ^*)$ and iterates gap $\|\bQ^k - \bQ^*\|_F$ against the iteration number for all the tested algorithms in Figs. \ref{fig:1} and \ref{fig:2}, respectively. For each tested algorithm, we choose its last iterate as $(\bP^*,\bQ^*)$. According to these figures, we can observe that the sequences of function value gaps and iterate gaps converge linearly, which supports our theoretical result in Theorem \ref{thm:2}. Moreover, it can be observed that PALMe enjoys a substantially faster linear convergence compared to the other methods.
To compare the solution quality, we report the value of TEV of the tested methods in Table \ref{table-2} averaged over 10 runs. We can observe that the solution quality of PALMe is comparable to those of the other methods. 

\subsection{Clustering Accuracy}\label{subsec:clus-accu}

In this subsection, we compare the clustering performance of our proposed method with PALM, iPALM, GiPALM, and pDCAe, which are introduced in Section \ref{subsec:conv-perf}. A good way of evaluating the clustering performance of a algorithm for solving Problem \eqref{PCA:L1} is to study its clustering accuracy when applied to clustering on a subspace; see, e.g., \cite{ding2006r, wang2021linear}. Specifically, we obtain a label vector by applying the \emph{k-means} clustering to the projections of data points onto the subspace returned by a algorithm for solving Problem \eqref{PCA:L1} and compute the clustering accuracy by comparing the returned label and the true label. In our experiments, we use the real-world datasets \emph{a6a}, {\em a9a}, {\em gisette}, and {\em ijcnn1} downloaded from LIBSVM \cite{chang2011libsvm}, whose dimensions can be found in Table \ref{table-3}. The dimension  $K$ of the subspace used by L1-PCA  is chosen such that $\sum_{k=1}^{K} \sigma^2_k \geq 0.8 \sum_{k=1}^{p} \sigma^2_k$, where $p=\min\{n,d\}$ and $\sigma_1\ge \dots \ge \sigma_p \ge 0$ are singular values of $\bm{X}$. We list the value of $K$ for each dataset in Table \ref{table-3}. When the difference between two consecutive iterates is less than $10^{-6}$, we stop the iterations.

% In this section, We run different algorithms in real four datasets which are extracted from LIBSVM: a6a, colon-cancer, gisette ,and ijcnn1. The dimension of subspace $K$ is chosen to satisfy $\sum_{k=1}^{K} \sigma^2_k \geq 0.8 \sum_{k=1}^{d} \sigma^2_k$. The information of datasets, extrapolation scheme, and parameters of algorithms are listed in \ref{table-3}. Next, we apply K-means to all recovered subspaces. To better evaluate the clustering performance, each algorithm is tested for 10 times and clustering accuracy and consumed time are computed by averaging the 10 results.
In the tests, we set the step-size and extrapolation parameters of the tested algorithms as in Table \ref{table-3}. The fixed step-size is still used in experiments. % Moreover, we use the same extrapolation parameters and stopping criterion as those in Section \ref{subsec:performance}. 
% Then, we report clustering accuracy and running time of the tested algorithms  in Table   averaged over 40 runs. 
To better evaluate the clustering performance, each algorithm is tested for 10 times. Then, we plot the clustering accuracy and the running time for the tested algorithms in Fig. \ref{fig:3} averaged over 10 runs. It is worth noting that the point in the top-left corner of the axes converges faster and achieves higher clustering accuracy than that in the bottom-right corner. % We may refer the algorithm represented by such point as more efficient one. 
Then, we can observe from Fig. \ref{fig:3} that PALMe can generally  achieve a comparable clustering accuracy to those of the other methods with less running time, which demonstrates its efficacy.

\begin{figure*}[!htbp]
	\begin{subfigure}[b]{0.22\textwidth}
		\centering
		\includegraphics[width=\textwidth]{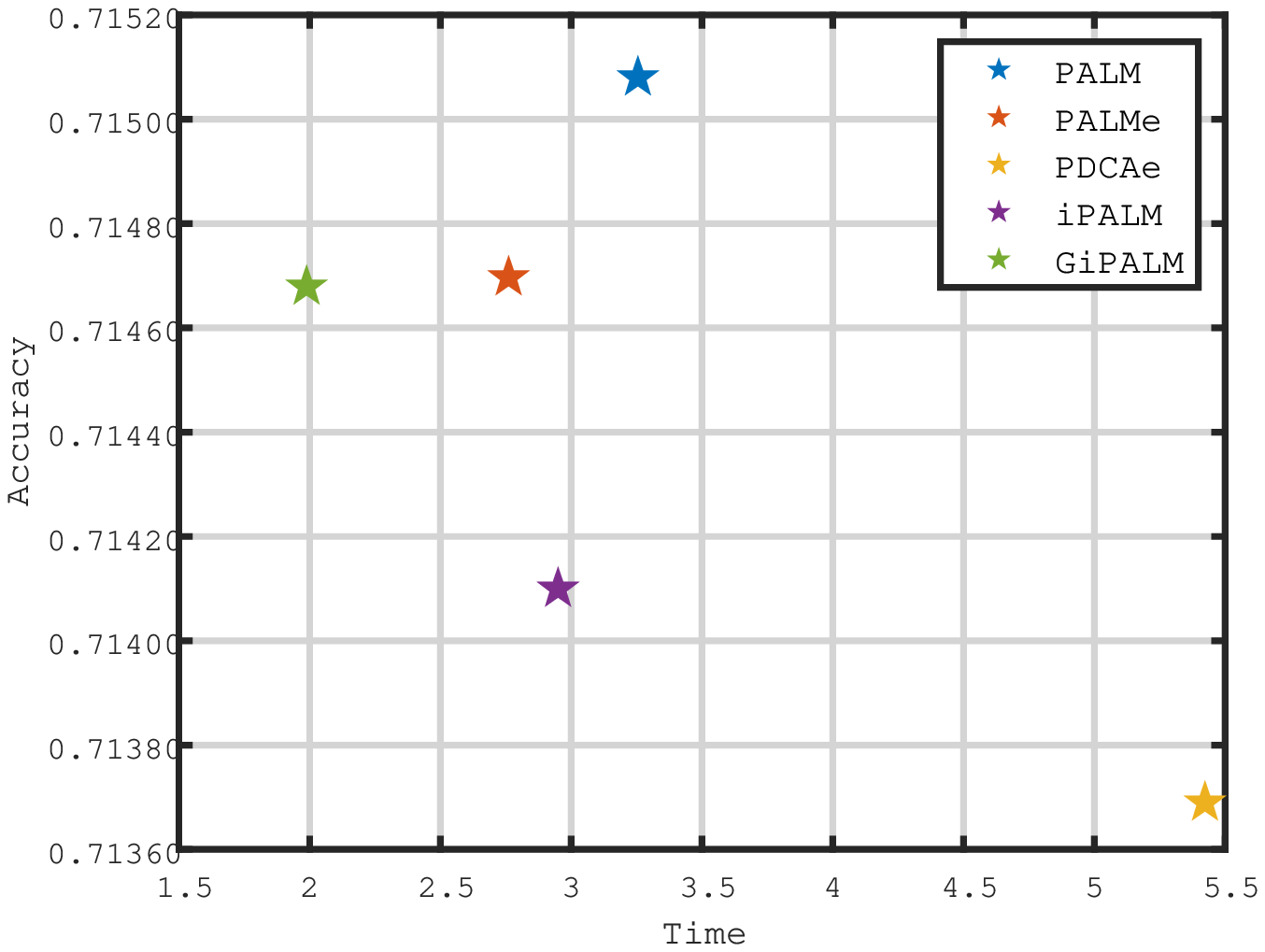}
		\caption{  $\emph{a6a}$}
	\end{subfigure}
	\begin{subfigure}[b]{0.22\textwidth}
		\centering
		\includegraphics[width=\textwidth]{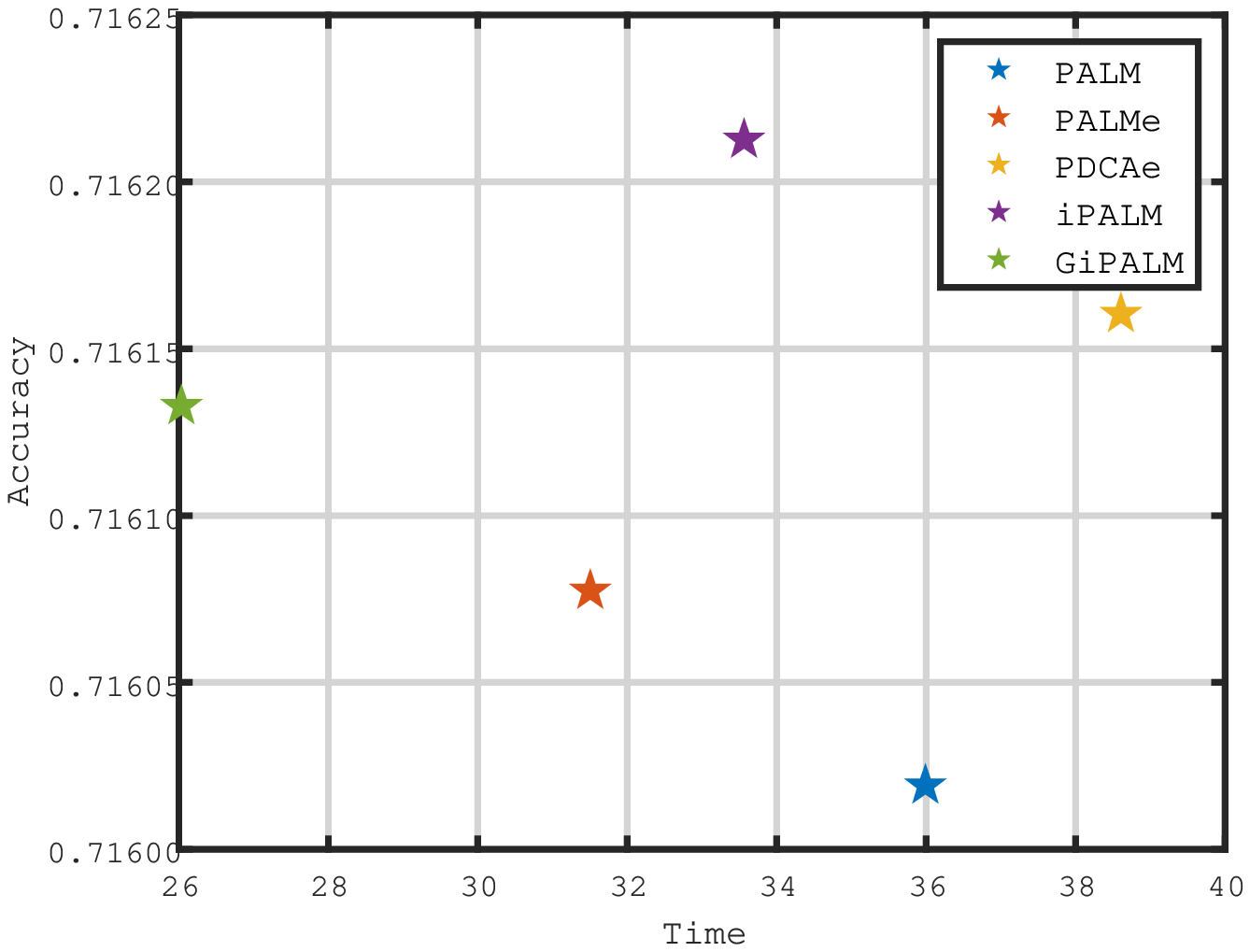}
		\caption{ $\emph{a9a}$}
	\end{subfigure}
	\begin{subfigure}[b]{0.22\textwidth}
		\centering
		\includegraphics[width=\textwidth]{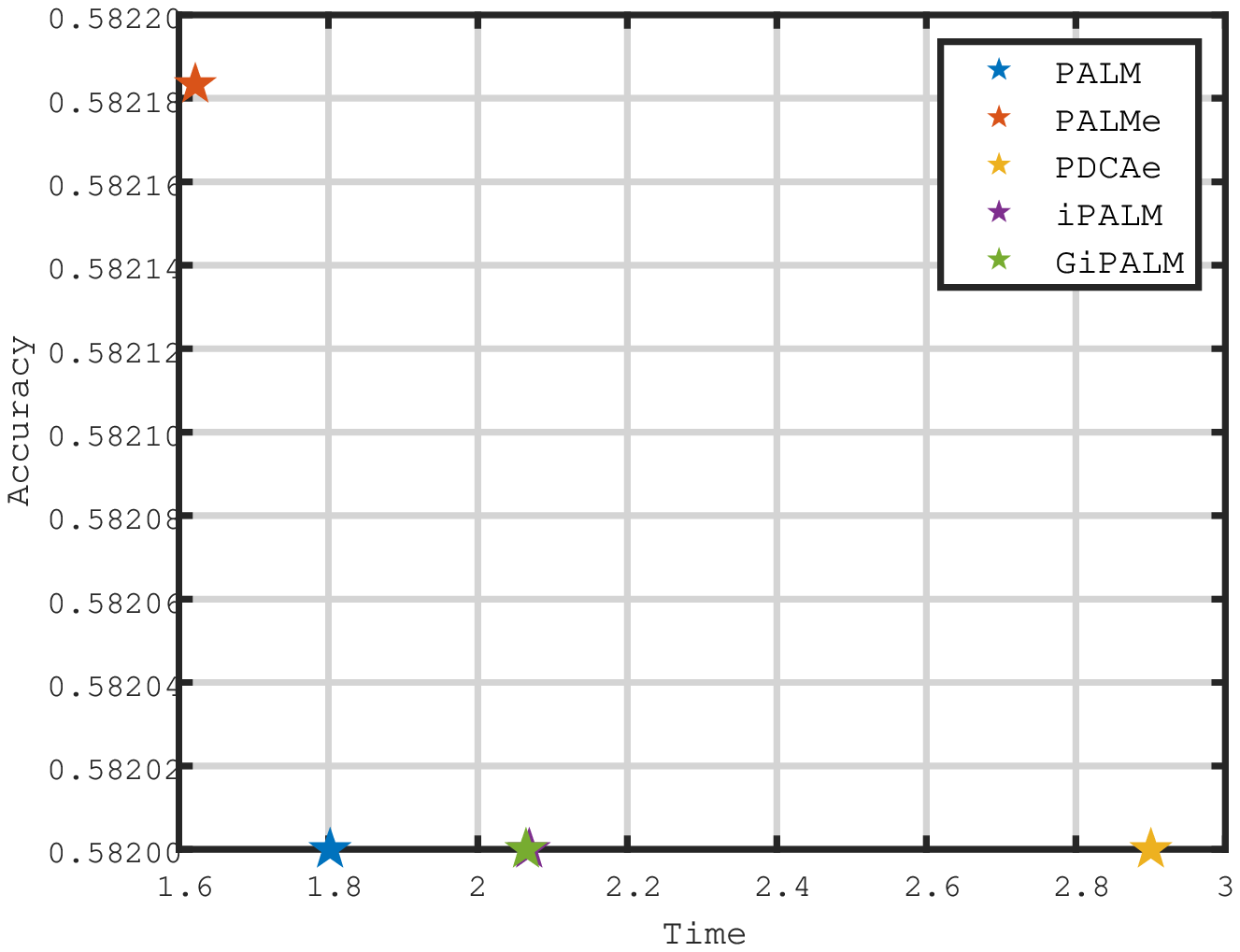}
		\caption{ $\emph{gisette}$}
	\end{subfigure}
	\begin{subfigure}[b]{0.22\textwidth}
		\centering
		\includegraphics[width=\textwidth]{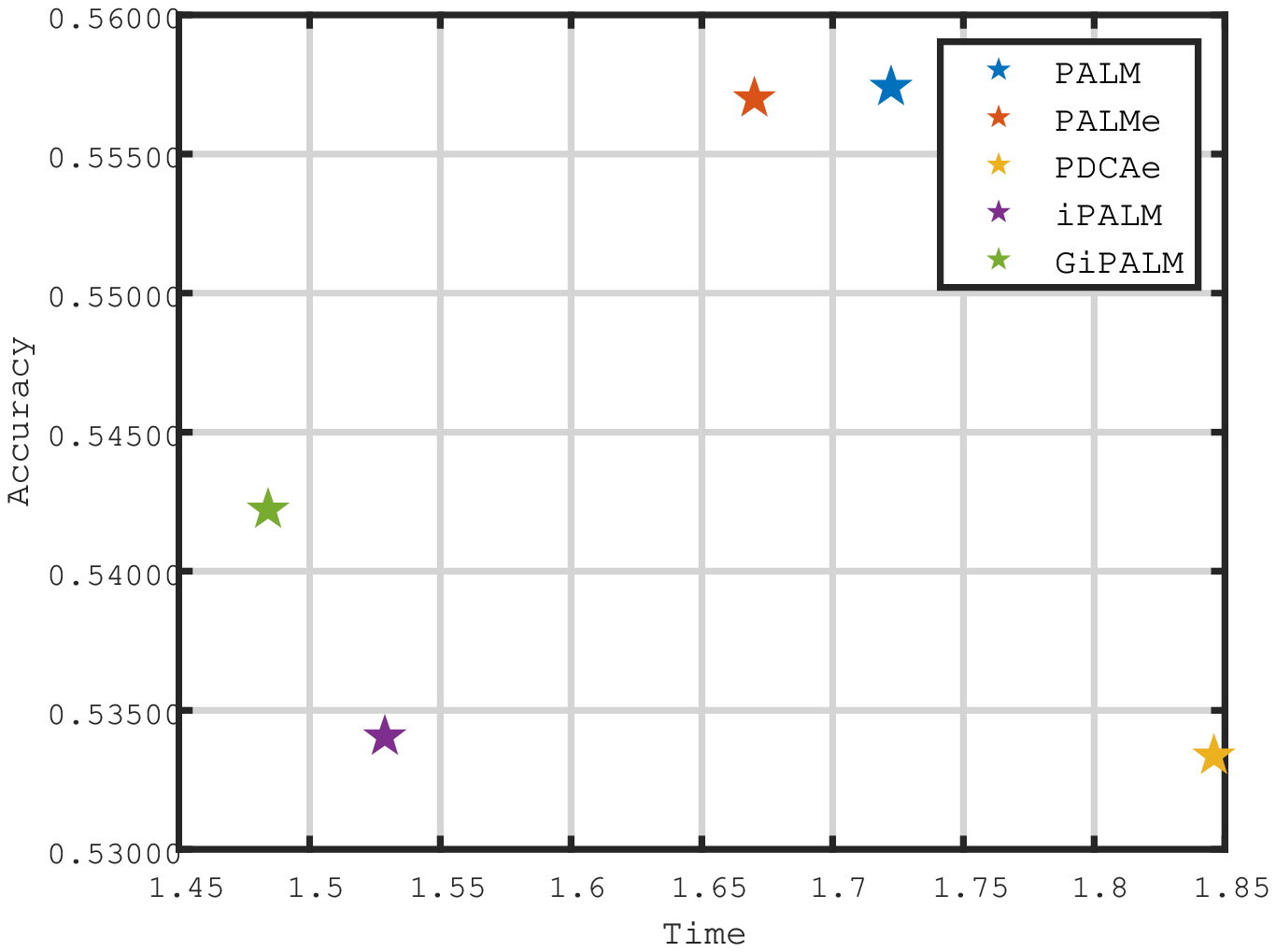}
		\caption{ $\emph{ijcnn}$}
	\end{subfigure}
	\caption{Clustering accuracy and running time on LIBSVM datasets}
	\label{fig:3}
\end{figure*}

\begin{table*}[!htbp]
	\caption{Dimension of datasets, dimension of subspace,  step-size and extrapolation parameters of the tested methods}
	\label{table-3}\vspace{-0.15in}
	\begin{center}
	\scalebox{0.93}{
		\begin{tabular}{c|ccccr}
			\hline
			\footnotesize $(\alpha,\beta,\gamma_P,\gamma_Q )$ & \footnotesize PALMe & \footnotesize PALM & \footnotesize iPALM & \footnotesize GiPALM  & \footnotesize pDCAe \\
			\hline
			\begin{tabular}{c}
				\footnotesize \emph{a6a} dataset \\
				\noalign{\vspace{-0.05cm}}
				\footnotesize $(n,d,K) = (11220,122,6)$
			\end{tabular} & \footnotesize $(10^{-7},10^{5},0,1)$ & \footnotesize $(10^{-8},10^{5},0,0)$ & \footnotesize $(10^{-7},10^{5},0.2,0.3)$ & \footnotesize $(10^{-6},5*10^{4},0.5,0.25)$ & \footnotesize $(-,5*10^{-6},-,1)$  \\ 
			\begin{tabular}{c}
				\footnotesize \emph{a9a} dataset \\
				\noalign{\vspace{-0.05cm}}
				\footnotesize $(n,d,K) = (32561,123,6)$
			\end{tabular} & \footnotesize $(10^{-8},10^{6},0,0.5)$ & \footnotesize $(10^{-7},10^{6},0,0)$ & \footnotesize $(10^{-8},10^{6},0.2,0.2)$ & \footnotesize $(10^{-8},10^{6},0.3,0.4)$ & \footnotesize $(-,10^{-6},-,0.3)$  \\
			\begin{tabular}{c}
				\footnotesize \emph{gisette} dataset \\
				\noalign{\vspace{-0.05cm}}
				\footnotesize $(n,d,K) = (6000,5000,1)$
			\end{tabular} & \footnotesize $(10^{-5},5000,0,1)$ & \footnotesize $(10^{-6},1000,0,0)$ & \footnotesize $(10^{-6},10^{4},0.05,0.05)$ & \footnotesize $(10^{-5},10^{4},0.05,0.05)$  & \footnotesize $(-,100,-,0.2)$   \\
			\begin{tabular}{c}
			\footnotesize \emph{ijcnn1} dataset \\
			\noalign{\vspace{-0.05cm}}
			\footnotesize $(n,d,K) = (49990,22,8)$
		    \end{tabular} & \footnotesize $(10^{-7},10^{4},0,0.1)$ & \footnotesize $(10^{-7},10^{4},0,0)$ & \footnotesize $(10^{-6},1.5*10^{4},0.05,0.05)$ & \footnotesize $(10^{-6},10^{4},0.2,0.2)$ & \footnotesize $(-,10^{-4},-,0.2)$  \\ 
			\hline
		\end{tabular}
	}
	\end{center}
\end{table*}

\subsection{Image Reconstruction}\label{subsec:image-re}

As suggested in \cite{markopoulos2014optimal,nie2021non}, the robustness to outliers can be visualized by image reconstruction. In this section, we conduct image reconstruction experiments on 6 gray-scale human face images (see the first row of Fig. \ref{fig:4}) downloaded from \emph{AT\&T} Database of Faces\footnote{\url{https://www.kaggle.com/datasets/kasikrit/att-database-of-faces?resource=download}} to test robustness of different models. We assume that the clean image $\bm{A}$ is not available and instead we have 9 corrupted versions $\bm{A}_1,\dots,\bA_9$. Here, these corrupted instances are generated by adding outliers to a block as follows. We firstly partition the original image $\bA$ into 9 equal-sized blocks, say $\bm{B}_1,\dots,\bm{B}_9$. Next, we select one block $\bm{B}_i$, divide it into 4 smaller blocks of equal size, add outliers drawn from the discrete uniform distribution in range $[1,200]$ to its diagonal blocks, and normalize it into range $[0,255]$ to generate $\bm{A}_i$ for $i=1,\dots,9$. One instance of the corrupted image is shown in the second row of Fig. \ref{fig:4}. Then, we can vectorize these 9 corrupted images by stacking them to form the data matrix $\bX$, i.e.,
\begin{align*}
\bm{X} = \begin{bmatrix}
\mathrm{vec}(\bA_1) & \dots & \mathrm{vec}(\bA_9).
\end{bmatrix}
\end{align*}
Now, we apply various PCA models to do image reconstruction on $\bX$, including our proposed approach, L1-PCA-based approach in \cite{wang2021linear}, R1-PCA-based approach in \cite{ding2006r}, matrix factorization based L1-PCA (MF-L1PCA) approach in \cite{yu2012efficient}, and L12-PCA-based approach in \cite{nie2021non}. We simply denote our approach by RI-L1PCA. The projection of $\bm{X}$ onto the learned subspace is referred as the reconstructed image, i.e, $\hat{\bX} = \bQ\bQ^T \bX$, where $\bQ$ is the returned subspace of the tested approach. % {\color{red}We may delete AW-PCA since the performance is too bad. I have checked the codes and I think there is no mistake. I have put the codes for face data to our folder.}

In the experiments, the maximal iteration is set as 1000 and $K = 2$. For each algorithm,  we stop it when the Frobenious norm of the difference between two consecutive iterates is less then $10^{-3}$. Then, we can obtain the reconstructed images learned by different approaches in Fig. \ref{fig:4}. % we offer a perceptual interpretation of reconstructed image data learned by different models. 
We can observe that the quality of the reconstructed images returned by our approach has comparable performance with others. % The extrapolation paramters of PALMe and PAMe for solving RI-L1PCA and L1-PCA models are 1.

% Specifically, given 10 corrupted image data which are occluded with grayscale-noise, we vectorize them and stack all to formulate matrix $\bX$ whose $i$-th column is the vectorization of $i$-th image. Then we fit $\bX$ with various PCA models, including L1-PCA-based approach in \cite{wang2021linear}, RI-PCA-based approach in \cite{ding2006r}, matrix factorization based L1-PCA (MF-L1PCA) approach in \cite{yu2012efficient}, L12-PCA-based approach in \cite{nie2021non}, and AW-PCA-based approach in \cite{park2016iteratively}. We simply denote our approach by RI-L1PCA.
%
% In the experiments, the maximal iteration is 1000 and $K=2$. The extrapolation paramters of PALMe and PAMe for solving RI-L1PCA and L1-PCA models are 1. When the difference between two consecutive iterates is less then $1e^{-3}$, we stop the algorithms. The projections onto learned subspaces are referred as reconstructed image data and the rank-2 reconstructed images are visualized in Figure \ref{fig:4}. We can observe that the reconstructed images of our model RI-l1PCA are more clear than others and offer better approximations of original image. 
\begin{figure}[!htbp]
	\centering
	\includegraphics[width=0.45\textwidth]{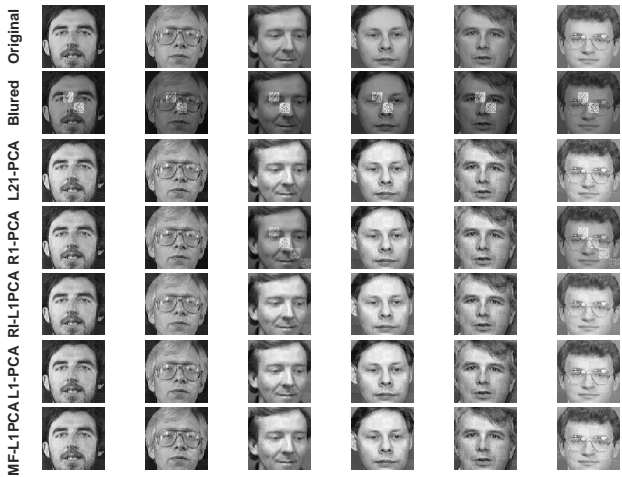}
	\caption{Image reconstruction of different models}
	\label{fig:4}
\end{figure}

\section{Conclusions}\label{sec:conc}
In this paper, we proposed a proximal alternating linearzed minimization method with a non-linear extrapolation to solve the rotationally invariant L1-PCA problem. To the best of our knowledge, this is the first work that presents a fast iterative method to tackle this non-smooth non-convex problem. %{\color{red} and our non-linear extrapolation is also new}. 
We proved that our method converges at least linearly to a critical point of the L1-PCA problem under a mild condition. %Although adopting the conventional way of constructing the potential function, the non-linear extrapolation makes our proof different from the usual one.
To demonstrate the efficacy of our method, extensive experimental results on both synthetic and real datasets are reported. In particular, we compare our approach with some existing robust PCA approaches to elucidate its robustness to outliers. Motivated by our proposed non-linear extrapolation, one future direction is to extend such extrapolation scheme to more general function class. Another possible direction is to design other optimization algorithms to tackle the rotationally invariant L1-PCA problem and analyze its convergence behavior. 

\bibliographystyle{IEEEtran}
\bibliography{ref}
\appendix

\subsection{Proof of Lemma \ref{lem:suffi-decre}} \label{first-appendix}
\begin{proof}
	We first prove (i). Due to $\bC^k \in {\mB}(n,d) \times {\rm St}(d,K) \times {\rm St}(d,K) $ for all $k \geq 0$ and the boundness of ${\mB}(n,d)$ and ${\rm St}(d,K)$, the sequence $\{\bC^k\}_{k\geq 0}$ is automatically bounded. 
	
	We next prove (ii). To simplify the notation, let
	\begin{equation*}
		\begin{split}
			&\ \bm{\Delta}_{\bP}^{k+1}=\bP^{k+1}-\bP^k,\ \bm{\Delta}_{\bQ}^{k+1}=\bQ^{k+1}-\bQ^k,\\
			&\ L_k = \|\bX\bP^k + \bP^{k^T}\bX^T\|,\ \forall\ k \ge 0.
		\end{split}
	\end{equation*}
	According to the updates \eqref{update:P-extra}, we have
	\begin{equation}\label{eq:pro}
		\langle\bX (\bP^{k+1}-\bP^k),\bE^k \rangle\geq \frac{\alpha_k}{2} \|\bP^{k+1}-\bP^k\|^2_F. 
	\end{equation}
	This, together with $H(\bP,\bQ) = -\langle \bP,\bX^T\bQ\bQ^T \rangle $ and $\bE^k =\bQ^{k}\bQ^{k^T}+\gamma_k(\bQ^{k}\bQ^{k^T}-\bQ^{k-1}\bQ^{{k-1}^T})$, implies
	\begin{equation*}\label{eq1:lem:suffi-decre}
		\begin{split}
			&\ H(\bP^{k+1},\bQ^k) - H(\bP^{k},\bQ^k) \le -\frac{\alpha_k}{2}\|\bm{\Delta}_{\bP}^{k+1}\|_F^2 \\
			&\ + \gamma_k \langle \bX(\bP^{k+1}-\bP^{k}), \bQ^k\bQ^{k^T} - \bQ^{k-1}\bQ^{{k-1}^T} \rangle \\
			&\ \le -\frac{\alpha_k}{2}\left( 1-\gamma_k \right)\|\bm{\Delta}_{\bP}^{k+1}\|_F^2  + \frac{2\gamma_k\|\bX\|^2}{\alpha_k}\|\bm{\Delta}_{\bQ}^k\|^2_F,
		\end{split}
	\end{equation*}
	where the second inequality is due to the following result:
	\begin{equation*}
		\begin{split}
			&\ \langle\bX(\bP^{k+1}-\bP^k),\bQ^k\bQ^{k^T}-\bQ^{k-1}\bQ^{{k-1}^T} \rangle\\
			= &\ \langle\bP^{k+1}-\bP^k,\bX^T\bQ^{k-1}(\bQ^{k^T}-\bQ^{{k-1}^T})\\
			+&\ \bX^T(\bQ^k-\bQ^{k-1})\bQ^{k^T} \rangle \le \frac{\alpha_k}{4}\|\bm{\Delta}_{\bP}^{k+1}\|_F^2\\
			+&\ \frac{\|\bX\bQ^{k-1}\|^2}{\alpha_k}\|\bm{\Delta}_{\bQ}^k\|^2_F
			+\frac{\alpha_k}{4}\|\bm{\Delta}_{\bP}^{k+1}\|_F^2+\frac{\|\bX\bQ^k\|^2}{\alpha_k}\|\bm{\Delta}_{\bQ}^k\|^2_F\\
			\le &\ \frac{\alpha_k}{2}\|\bm{\Delta}_{\bP}^{k+1}\|_F^2+\frac{2\|\bX\|^2}{\alpha_k}\|\bm{\Delta}_{\bQ}^k\|^2_F,
		\end{split}
	\end{equation*}
	where the first inequality uses the fact that $2\langle \bA,\bB \rangle \le \rho\|\bA\|_F^2 + \|\bB\|_F^2/\rho$ for any $\rho > 0$ and $\|\bA\bB\|_F \le \|\bA\|_F\|\bB\|_F$. The last inequality is due to $\|\bX\bQ\|_F \le \|\bX\|_F$ for any $\bQ \in {\rm St}(d,K)$. Moreover, according to the update \eqref{update:Q} and \cite[Lemma 2]{bolte2014proximal}, we have
	\begin{equation}\label{eq2:lem:suffi-decre}
		H(\bP^{k+1},\bQ^{k+1}) - H(\bP^{k+1},\bQ^k) \le -\frac{1}{2}\left( \beta_k - L_k \right)\|\bm{\Delta}_{\bQ}^{k+1}\|_F^2. 
	\end{equation}
	Then, we have
	\begin{equation*}
		\begin{split}
			&\ \Phi_{\beta_*}(\bC^{k+1}) - \Phi_{\beta_*}(\bC^k)\\
			=&\  H(\bP^{k+1},\bQ^{k+1}) - H(\bP^{k},\bQ^k)
			+  \frac{\beta_*}{2}\|\bm{\Delta}_{\bQ}^{k+1}\|^2_F - \frac{\beta_*}{2}\|\bm{\Delta}_{\bQ}^k\|^2_F \\
			\le&\ -\frac{\alpha_k}{2}\left( 1-\gamma_k \right)\|\bm{\Delta}_{\bP}^{k+1}\|_F^2 -\frac{1}{2}\left( \beta_k - L_k - \beta_* \right)\|\bm{\Delta}_{\bQ}^{k+1}\|_F^2 \\
			&\ -\frac{1}{2}\left( \beta_* - \frac{4\gamma_k\|\bX\|^2}{\alpha_k} \right)\|\bm{\Delta}_{\bQ}^k\|^2_F  \le -\kappa_1 \|\bC^{k+1} - \bC^k\|_F^2, 
		\end{split}
	\end{equation*}
	where the first inequality follow from \eqref{eq1:lem:suffi-decre} and \eqref{eq2:lem:suffi-decre} and the second inequality is due to \eqref{pararms} and $\kappa_1 = \min\{ \alpha_*(1-\gamma^*)/2,\beta^*/4 \}$. 

	Finally, we prove (iii). According to the updates \eqref{update:P-extra}, \eqref{update:Q}, we have
	\begin{equation} \label{eq1:opti}
		\z \in -\bX^T\bE^k +\alpha_k (\bP^{k+1}-\bP^k)+ \mN_{\mB(n,d)}(\bP^{k+1})
	\end{equation}
	\begin{equation}\label{eq2:opti}
		\begin{aligned}
			\z \in &\ -(\bX\bP^{k+1}+\bP^{{k+1}^T}\bX^T)\bQ^k\bQ^{k^T}\\
			&\ +\beta_k(\bQ^{k+1}-\bQ^k)+\mN_{{\rm St}(d,K)}(\bQ^{k+1}).
		\end{aligned}
	\end{equation}			
	This, together with \cite[Proposition 2.1]{attouch2010proximal}, yields
	\begin{equation}\label{parti:phi}
		\begin{aligned}
			&\ \partial \Phi_{\beta_*}\left(\bP^{k+1},\bQ^{k+1},\bQ^k\right)\\
			=&\ \left\{-\bX^T\bQ^{k+1}\bQ^{{k+1}^T} + \mN_{\mB(n,d)}(\bP^{k+1}) \right\}\\
			\times&\ \left\{\begin{split}
				&\  -(\bX\bP^{k+1}+\bP^{{k+1}^T} \bX^T) \bQ^{k+1} \bQ^{{k+1}^T} \\
				&\ + \beta_* (\bQ^{k+1}-\bQ^k )+\mN_{{\rm St}(d,K)}(\bQ^{k+1})
			\end{split} \right\}\\
			\times& \left\{ \beta_* (\bQ^k-\bQ^{k+1} )\right\}.
		\end{aligned}
	\end{equation}
	Combining this with \eqref{eq1:opti} and \eqref{eq2:opti}  yields
	\begin{equation*}
		\begin{split}
			&\ \dist^2\left(\z, \partial \Phi_{\beta_*}\left(\bP^{k+1},\bQ^{k+1},\bQ^k\right)\right)\\
			\le  &\ \|\bX^T(\bE^k-\bQ^{k+1}\bQ^{{k+1}^T})+\alpha_k\bm{\Delta}_{\bP}^{k+1}\|_F^2 \\
			+ &\ \|(\bX\bP^{k+1}+\bP^{{k+1}^T} \bX^T)(\bQ^{k} \bQ^{{k}^T}-\bQ^{k+1} \bQ^{{k+1}^T}) + \\
			&\ (\beta_*-\beta_k)\bm{\Delta}_{\bQ}^{k+1} \|_F^2 + \|\beta_*\bm{\Delta}_{\bQ}^{k+1}\|_F^2 \\
			\le &\ 4\|\bX\|^2(\|\bQ^{k+1}\bQ^{{k+1}^T}-\bQ^{k}\bQ^{{k}^T}\|_F^2 \\
			+ &\ \gamma_k^2\|\bQ^{k}\bQ^{{k}^T}-\bQ^{k-1}\bQ^{{k-1}^T}\|_F^2) + 2\alpha_k^2 \|\bm{\Delta}_{\bP}^{k+1}\|_F^2 \\
			+ &\ 2\|\bX\bP^{k+1}+\bP^{{k+1}^T} \bX^T\|^2\|\bQ^{k}\bQ^{{k}^T}-\bQ^{k-1}\bQ^{{k-1}^T}\|_F^2\\
			+ &\ 2(\beta_k-\beta_*)^2\|\bm{\Delta}_{\bQ}^{k+1} \|_F^2 + \beta_*^2\|\bm{\Delta}_{\bQ}^{k+1}\|_F^2 \\
			\le &\ 16\|\bX\|^2(\|\bm{\Delta}^{k+1}_{\bQ}\|_F^2 + \gamma^2_k\|\bm{\Delta}^{k}_{\bQ}\|_F^2) +  2\alpha_k^2 \|\bm{\Delta}_{\bP}^{k+1}\|_F^2\\
			+ &\ 8nd\|\bX\|^2\|\bm{\Delta}^{k}_{\bQ}\|_F^2 + (2(\beta_k-\beta_*)^2+\beta_*^2)\|\bm{\Delta}_{\bQ}^{k+1} \|_F^2 \\
			\le &\ 2\alpha_k^2 \|\bm{\Delta}_{\bP}^{k+1}\|_F^2 + 8(2\gamma^2_k+nd)\|\bX\|^2\|\bm{\Delta}^{k}_{\bQ}\|_F^2 \\
			+ &\ (16\|\bX\|^2+2(\beta_k-\beta_*)^2+\beta_*^2)\|\bm{\Delta}_{\bQ}^{k+1} \|_F^2.  
		\end{split} 
	\end{equation*}
	By taking $\kappa_2 = \max\{\sqrt{2}\alpha^*, 2\sqrt{4\gamma^{*^2}+2nd}\|\bX\|,\\ \sqrt{16\|\bX\|^2+2(\beta_k-\beta_*)^2+\beta_*^2}\}$, we obtain the desired result \eqref{eq:rela-err}. 
\end{proof}

\subsection{Proof of Corollary \ref{coro:KL}}\label{appendix:coro-KL}
\begin{proof} 
	Using Fact \ref{fact:QPOC} with setting $\bB=\bI$ and $\bA=(\bX\bP_i+\bP_i^T \bX^T)/{2}$, we have that the K\L\ exponent of $\ell_i\left(Q\right)$ is $1/2$ for all $i=1,\dots,2^{nd}$. % i.e, $\dist(\b0, \partial \ell_i(\bQ)) \geq \eta_{\ell_i} | \ell_i(\bQ) - \ell_i(\bQ^*) |^{1/2}$ for all $\bQ \in {\rm St}(d,K)$ with $\| \bQ-\bQ^* \|_F \le \epsilon_{\ell_i}$, where $\epsilon_{\ell_i} \in \left(0,1\right)$, $\eta_{\ell_i}> 0$. 
Using the same argument in the proof in \cite[Section 3.3]{wang2021linear} yields that the K\L\ exponent of $h$ is $1/2$. This, together with \eqref{eq:potential} and \cite[Theorem 3.6]{li2018calculus}, implies the K\L\ exponent of $\Phi_{\beta}$ is $1/2$. 
\end{proof}

\subsection{Proof of Lemma \ref{relat:h-l}} \label{second-appendix}
\begin{proof} 
	We first prove (i). According to \cite[Proposition 2.1]{attouch2010proximal}, we have 
	\begin{equation}\label{parti:h}
		\begin{split}
			\partial h(\bP,\bQ) =&\ \left\{-\bX^T\bQ\bQ^{T} + \mN_{\mB(n,d)}(\bP) \right\} \\
			 \times &\ \{ -(\bX\bP+\bP^T\bX^T)\bQ +\mN_{{\rm St}(d,K)}(\bQ) \}. 
		\end{split}
	\end{equation}
	This, together with $(\z,\z) \in \partial h(\bP,\bQ)$, $\bP \in \sign(\bX^T\bQ\bQ^T)$, and \eqref{crit:l}, implies  
	the desired result. 
	Next, we prove (ii). According to $\z \in \partial \Phi_{\beta}(\bP,\bQ,\bQ^\prime)$ and  \eqref{parti:phi}, we obtain
	$\bQ^\prime = \bQ$. This, together with \eqref{parti:h}, implies that $\z \in \partial \Phi_{\beta}(\bP,\bQ,\bQ)$ if and only if $\z \in \partial h(\bP,\bQ)$. 
\end{proof}
\end{document}